\documentclass[12pt,a4paper]{amsart}

\usepackage[margin=25mm,footskip=1.0cm]{geometry}
\usepackage{amsmath,amstext,amsopn,amssymb,amsthm,amsaddr,color,lipsum}
\usepackage[utf8]{inputenc}

\usepackage{url}

\usepackage{tabu}
\usepackage{ccaption}
\usepackage{etoolbox}
\usepackage{stringstrings}
\usepackage{listofitems}
\usepackage{mathtools}
\usepackage{enumitem}

\usepackage{scrextend}
\deffootnote{1.5em}{0em}{${}^{\text{\thefootnotemark}}$\hspace*{0.5em}}


\makeatletter
\setcounter{tocdepth}{3}
\renewcommand{\tocsection}[3]{\indentlabel{\@ifnotempty{#2}{\bfseries\ignorespaces#1 #2\rlap{.}\quad}}\bfseries\parbox[t]{0.7\linewidth}{#3}\vspace{0.5\baselineskip}}
\renewcommand{\tocsubsection}[3]{\setsepchar{.}\readlist\mysectionnumber{#2}\hphantom{\bfseries\ignorespaces\mysectionnumber[1]\quad}\indentlabel{\@ifnotempty{#2}{\ignorespaces#1 #2\quad}}#3\vspace{0.5\baselineskip}}
\newcommand\@dotsep{2.5}
\def\@tocline#1#2#3#4#5#6#7{\relax
\ifnum #1>\c@tocdepth 
\else
\par \addpenalty\@secpenalty\addvspace{#2}%
\begingroup \hyphenpenalty\@M
\@ifempty{#4}{\@tempdima\csname r@tocindent\number#1\endcsname\relax}{\@tempdima#4\relax}%
\parindent\z@ \leftskip#3\relax \advance\leftskip\@tempdima\relax
\rightskip\@pnumwidth plus1em \parfillskip-\@pnumwidth
#5\leavevmode\hskip-\@tempdima{#6}\nobreak
\ifnum#1=1\else\leaders\hbox{$\m@th\mkern \@dotsep mu\hbox{.}\mkern \@dotsep mu$}\fi
\hfill
\nobreak
\hbox to\@pnumwidth{\@tocpagenum{\ifnum#1=1\bfseries\fi#7}}\par
\nobreak
\endgroup
\fi}
\AtBeginDocument{\expandafter\renewcommand\csname r@tocindent0\endcsname{0pt}}
\makeatother

\makeatletter
\renewcommand{\@secnumfont}{\bfseries}
\renewcommand\section{\@startsection{section}{1}{0pt}{-1\baselineskip}{0.2\baselineskip}{\raggedright\normalfont\large\bfseries}}
\renewcommand\subsection{\@startsection{subsection}{2}{0pt}{-1\baselineskip}{0.2\baselineskip}{\raggedright\normalfont\bfseries}}
\renewcommand\subsubsection{\@startsection{subsubsection}{3}{0pt}{-1\baselineskip}{0.1\baselineskip}{\raggedright\normalfont\bfseries}}

\makeatother

\makeatletter
\AtBeginDocument{%
\abovedisplayskip=2ex
\belowdisplayskip=2ex
\abovedisplayshortskip=2ex
\belowdisplayshortskip=2ex
}
\makeatother

\numberwithin{equation}{section}

\newtheoremstyle{theorem}{4ex}{4ex}{\it}{}{\bf}{}{1em}{}
\theoremstyle{theorem}
\newtheorem{theorem}{Theorem}
\newtheorem{lemma}{Lemma}
\newtheoremstyle{definition}{2ex}{2ex}{\it}{}{\bf}{}{1em}{}
\theoremstyle{definition}
\newtheorem{defi}{Definition}
\newtheoremstyle{remark}{2ex}{2ex}{\normalfont}{}{\bf}{}{1em}{}
\theoremstyle{remark}
\newtheorem{remark}{Remark}
\newtheoremstyle{proposition}{4ex}{4ex}{\it}{}{\bf}{}{1em}{}
\theoremstyle{proposition}
\newtheorem{propo}{Proposition}
\numberwithin{propo}{section}

\newcommand{\ol}{\overline}

\newcommand{\ve}{\varepsilon}

\newcommand{\N}{\mathbb{N}}

\newcommand{\R}{\mathbb{R}}

\newcommand{\mcE}{\mathcal{E}}
\newcommand{\mcF}{\mathcal{F}}
\newcommand{\mcN}{\mathcal{N}}

\newcommand{\dd}{\,\mathrm{d}}
\newcommand{\del}{\partial}
\newcommand{\intl}{\int\limits}

\newcommand{\scal}[1]{\left\langle #1 \right\rangle}
\newcommand{\nor}[1]{\left\lVert #1 \right\rVert}

\DeclareMathOperator{\curl}{curl}
\DeclareMathOperator{\Div}{div}
\DeclareMathOperator*{\esssup}{ess\,sup}
\DeclareMathOperator{\mes}{mes}
\DeclareMathOperator{\supp}{supp}

\parindent1em
\parskip0ex

\pagestyle{plain}

\begin{document}

\title{On some properties of weak solutions to the                                                                                                                                     Maxwell equations}
\author{Joachim Naumann}
\address{Institut f\"ur Mathematik\\Humboldt-Universit\"at zu Berlin\\[2ex]{\normalfont\today}}
\email{jnaumann@math.hu-berlin.de}

\begin{abstract}
This paper is concerned with weak solutions $\{e,h\} \in L^2 \times L^2$ of the time-dependent Maxwell equations.
We show that these solutions obey an energy equality. 
Our method of proof is based on the approximation of $\{e,h\}$  by its Steklov mean with respect to time t. This approximation technique  is well-known for establishing integral estimates for weak solutions of parabolic equations. In addition we prove the 
uniqueness of $\{e,h\}$. 
\end{abstract}

\keywords{Maxwell equations, electromagnetic energy, weak solutions, energy equality, Steklov mean. 2020 MSC: 35A01, 35A02, 35B45,
35Q61.}

\maketitle

\tableofcontents

\newpage

\section{Introduction}\label{s:1}

\subsection{Field equations.~Constitutive laws}\label{s:1.1}

Let $\Omega \in \R^3$ be a bounded domain, let $0 < T < +\infty$ and put $Q_T = \Omega \times ]0,T[$.
The evolution of an electromagnetic field in a medium at rest occupying the region $\Omega$, is governed by the system of PDEs
\begin{align}
\label{eq:1.1}
\del_t d + j\,\, & =\,\, \curl h && \text{Amp\`ere-Maxwell law,}\\
\label{eq:1.2}
\del_t b\,\, & =\,\, -\curl e && \text{Faraday law.}
\end{align}
The meaning of the vector fields $\{d,b;e,h\}$ is\\[1ex]
\hspace*{6em}$d$ \textit{electric displacement},
$b$ \textit{magnetic induction},\\
\hspace*{6em}$e$ \textit{electric field},
$h$ \textit{magnetic field}.\\[1ex]
The scalar fields\\[1ex]
\hspace*{6em}$d \cdot e$ \textit{electric density},\\
\hspace*{6em}$b \cdot h$ \textit{magnetic density},\\
\hspace*{6em}$j \cdot e$ \textit{electric power density}\\[1ex]
are the basic energy densities for the electromagnetic field modelled by $\{d,b;e,h\}$.

Applying the $\Div$-operator to both sides of (\ref{eq:1.1}) and (\ref{eq:1.2}) and integrating over the interval $[0,t]$ ($0 < t \leq T$) gives
\begin{align*}
(\Div d)(x,t) + \int_0^t (\Div j)(x,s)\dd s\,\, =\,\, (\Div d)(x,0),\\
(\Div b)(x,t)\,\, =\,\, (\Div b)(x,0)
\end{align*}
for all $(x,t) \in Q_T$. The scalar function
\begin{align*}
\rho(x,t)\,\, =\,\, -\int_0^t (\Div j)(x,s) \dd s
\end{align*}
is called \textit{electric charge}.
For details see, e.g., \cite[Chap.~1]{Bossavit}, \cite[Chap.~1]{Fabrizio}, \cite[Chap.~18; 27]{Feynman}, \cite[Chap.~6]{Jackson} and \cite[Teil~I, §§3;4]{Sommerfeld}.

In this paper, we consider the following constitutive laws for the vector fields $\{d,b;e,h\}$
\begin{align}
\label{eq:1.3}
d\,\, =\,\, \ve e, \quad b\,\, =\,\, \mu h~\text{\footnotemark},
\end{align}
\footnotetext{For $a = \{a_{kl}\}_{k,l=1,2,3}$ and $\xi = \{\xi_k\}_{k=1,2,3}$ ($a_{kl}, \xi_k \in \R$) we write $a\xi = \{a_{kl}\xi_l\}_{k=1,2,3}$ (summation over repeated indices).}%
where the symmetric non-negative matrices $\ve = \ve(x) = \{\ve_{kl}(x)\}_{k,l=1,2,3}$ and $\mu = \mu(x) = \{\mu_{kl}(x)\}_{k,l=1,2,3}$ ($x \in \Omega$)
characterize the electric permittivity and magnetic permeability, respectively, of the medium under consideration.
More general constitutive laws are discussed, e.g., in \cite[Section~1.4]{Fabrizio}, and \cite[S.~20-22]{Sommerfeld}.

Substituting (\ref{eq:1.3}) into (\ref{eq:1.1}), (\ref{eq:1.2}) gives
\begin{align}
\label{eq:1.4}
\ve\del_te + j\,\, & =\,\, \curl h,\\
\label{eq:1.5}
\mu\del_th\,\, & =\,\, - \curl e.
\end{align}
Here, the vector fields $e$ and $h$ are the unknowns.

\begin{remark}
\label{r:1}
The following structure of $j$ is widely considered
\begin{align*}
j\,\, =\,\, j_0 + j_1,
\end{align*}
where
\begin{align*}
j_0\,\, =\,\, \sigma e \quad \text{Ohm law}.
\end{align*}
The matrix $\sigma = \sigma(x,t) = \{\sigma_{kl}(x,t)\}_{k,l=1,2,3}$ ($(x,t) \in Q_T$) characterizes the \textit{electrical conductivity}
of the medium.
By physical reasons, $\sigma$ has to satisfy the conditions
\begin{align*}
(\sigma(x,t)\xi) \cdot \xi\, \geq\, 0\,\,\, \ \forall (x,t) \in Q_T,  \ \forall \xi \in \R^3
\end{align*}
The vector field $j_1$ represents a given current density (see, e.g., \cite[pp.~10-11]{Bossavit}, \cite[S.~19-20]{Sommerfeld}).
\end{remark}

\subsection{Balance of electromagnetic energy}\label{s:1.2}

Let $\{e,h\}$ be a classical solution of (\ref{eq:1.4}), (\ref{eq:1.5}) in $Q_T$.
We multiply scalarly (\ref{eq:1.4}) and (\ref{eq:1.5}) by $e$ and $h$, respectively, and add the equations.
We get
\begin{align}
\label{eq:1.6}
\frac{1}{2} \frac{\del}{\del t} \bigg((\ve e) \cdot e + (\mu h) \cdot h\bigg) + \Div S + j \cdot h\,\, =\,\, 0~\text{\footnotemark},
\end{align}
\footnotetext{\label{fn:2}Notice $\Div(a \times b) = b \cdot \curl a - a \cdot \curl b$.}%
where
\begin{align*}
S\,\, =\,\, e \times h
\end{align*}
denotes the \textit{Poynting vector} of $\{e,h\}$.
Integrating (\ref{eq:1.6}) over $\Omega$ yields
\begin{align}
\label{eq:1.7}
\frac{\dd}{\dd t}\mcE(t) + \intl_{\Omega} (\Div S)(x,t) \dd x + \intl_{\Omega} j(x,t) \cdot e(x,t) \dd x\,\, =\,\, 0
\end{align}
for all $t \in [0,T]$, where
\begin{align*}
\mcE(t)\,\, =\,\, \frac{1}{2} \intl_{\Omega} \big[(\ve(x)e(x,t)) \cdot e(x,t) + (\mu(x)h(x,t)) \cdot h(x,t)\big] \dd x, \quad t \in [0,T].
\end{align*}
The scalar $\mcE(t)$ represents the \textit{electromagnetic energy of $\{e,h\}$} at the time $t$.
Equation (\ref{eq:1.7}) is called \textit{balance of electromagnetic energy} (or \textit{Poynting's theorem}).

\qed

Throughout our further discussion in this section, we suppose that the boundary $\Gamma = \del\Omega$ of $\Omega$ is sufficiently smooth.
An application of the divergence theorem to the integral involving $\Div S$ in (\ref{eq:1.7}) gives
\begin{align}
\label{eq:1.8}
\frac{\dd}{\dd t} \mcE(t) + \intl_{\Gamma} n(x) \cdot S(x,t) \dd \Gamma + \intl_{\Omega} j(x,t) \cdot e(x,t) \dd x\,\, =\,\, 0,
\end{align}
where $n(x)$ denotes the outward directed unit normal at $x \in \Gamma$.
The boundary integral in (\ref{eq:1.8}) characterizes the outgoing flux of electric power through $\Gamma$
(see, e.g., \cite[Chap.~1.3.1]{Bossavit}, \cite[Chap.~6.8]{Jackson}, and \cite[Teil~I, §5]{Sommerfeld}).

We note that (\ref{eq:1.8}) is formally equivalent to the following \textit{energy equality}
\begin{align}
\label{eq:1.9}
\mcE(t) + \intl_s^t \intl_{\Gamma} n(x) \cdot S(x,\tau) \dd \Gamma \dd \tau + \intl_s^t \intl_{\Omega} j(x,\tau) \cdot e(x,\tau) \dd x \dd \tau\,\, =\,\, \mcE(s)
\end{align}
for all $s,t \in [0,T]$, $s < t$.

\qed

We now consider a classical solution $\{e,h\} \in C^1(\ol{Q}_T)^3 \times C^1(\ol{Q}_T)^3$ of (\ref{eq:1.4}), (\ref{eq:1.5}) that satisfies the conditions
\begin{align}
\label{eq:1.10}
& n \times e\,\, =\,\, 0\quad \text{ on } \Gamma \times [0,T],\\
\label{eq:1.11}
& e\,\, =\,\, e_0,\quad h\,\, =\,\, h_0\quad \text { in } \Omega \times \{0\},
\end{align}
where $\{e_0,h_0\}$ are given data in $\Omega$.
We obtain
\begin{align*}
\intl_{\Gamma} n \cdot S \dd \Gamma
\,\,& =\,\, \intl_{\Gamma} n \cdot (e \times h) \dd \Gamma \quad \text{(definition of $S$)}\\[1ex]
& =\,\, \frac{1}{2} \intl_{\Gamma} \left( - (n \times h) \cdot e + (n\times e) \cdot h \right) \dd \Gamma~\text{\footnotemark}\\[1ex]
& =\,\, 0.
\end{align*}
\footnotetext{Observe $e \cdot (h \times n) = n \cdot (e \times h) = h \cdot (n \times e)$ (cf. footnote \ref{fn:2})}%
Thus, energy equality (\ref{eq:1.9}) takes the form
\begin{align}
\label{eq:1.12}
\mcE(t) + \intl_s^t \intl_{\Omega} j(x,\tau) \cdot e(x,\tau) \dd x \dd \tau\,\, =\,\, \mcE(s) \quad \forall s, t \in [0,T],\ s \leq t,
\end{align}
or, equivalently,
\begin{align}
\label{eq:1.13}
\mcE(t) + \intl_0^t \intl_{\Omega} j(x,\tau) \cdot e(x,\tau) \dd x \dd \tau\,\, =\,\, \mcE(0) \quad \forall t \in [0,T],
\end{align}
where
\begin{align*}
\mcE(0)\,\, =\,\, \frac{1}{2} \intl_{\Omega} \left[ (\ve(x)e_0(x)) \cdot e_0(x) + (\mu(x)h_0(x)) \cdot h_0(x) \right] \dd x.
\end{align*}

\qed

\subsection{An integral identity for classical solutions of (\ref{eq:1.4}), (\ref{eq:1.5}), (\ref{eq:1.10}), (\ref{eq:1.11})}\label{s:1.3}

Let $\{e,h\} \in C^1(\ol{Q}_T)^3 \times C^1(\ol{Q}_T)^3$ be a classical solution of (\ref{eq:1.4}), (\ref{eq:1.5}).
Given $\{\phi,\psi\} \in C^1(\ol{Q}_T)^3 \times C^1(\ol{Q}_T)^3$ such that $\phi(x,T) = \psi(x,T) = 0$ for all $x \in \Omega$,
we multiply (\ref{eq:1.4}) and (\ref{eq:1.5}) scalarly by $\phi$ and $\psi$, respectively, add the obtained equations,
integrate over $Q_T$ and carry out an integration by parts with respect to $t$ over the interval $[0,T]$.
It follows
\begin{align}
\label{eq:1.14}
& - \intl_{Q_T} \left( (\ve e) \cdot \del_t\phi + (\mu h) \cdot \del_t\psi \right) \dd x \dd t
+ \intl_{Q_T} \left( - (\curl h) \cdot \phi + (\curl e) \cdot \psi \right) \dd x \dd t \notag\\
& \hspace*{1em} +\, \intl_{Q_T} j \cdot \phi \dd x \dd t\,\, = \intl_{\Omega} \left( (\ve e) \cdot \phi + (\mu h) \cdot \psi \right) \dd x \bigg|_{t=0}.
\end{align}

To proceed further, we will need the following Green formula
\begin{align*}
\intl_{\Omega} (\curl a) \cdot b \dd x - \intl_{\Omega} a \cdot (\curl b) \dd x\,\, =\, \intl_{\Gamma} (n \times a) \cdot b \dd \Gamma,\quad a, b \in C^1(\ol{\Omega})^3.
\end{align*}
We make use of this formula with $a = -\,h(\cdot,t)$, $b = \phi(\cdot,t)$ resp. $a = e(\cdot,t)$, $b = \psi(\cdot,t)$ ($t \in [0,T]$), and integrate then over the interval $[0,T]$.
We obtain
\begin{align*}
& \hspace*{0.2\linewidth} \intl_{Q_T} \left( - (\curl h) \cdot \phi + (\curl e) \cdot \psi \right) \dd x \dd t\\
& =\, \intl_0^T \intl_{\Gamma} \left( - (n \times h) \cdot \phi + (n \times e) \cdot \psi \right) \dd \Gamma \dd t
+ \intl_{Q_T} \left( - h \cdot (\curl \phi) + e \cdot (\curl \psi) \right) \dd x \dd t.
\end{align*}
Substituting this into (\ref{eq:1.14}) gives
\begin{align*}
&-\intl_{Q_T}\left( (\ve e)\cdot\del_t \phi + (\mu h)\cdot\del_t\psi\right) \dd x \dd t
+ \intl_0^T \intl_{\Gamma} \left( - (n \times h) \cdot \phi + (n \times e) \cdot \psi \right) \dd \Gamma \dd t \notag\\
& \hspace*{1em} + \intl_{Q_T} \left( - h \cdot (\curl \phi) + e \cdot (\curl \psi) \right) \dd x \dd t
+ \intl_{Q_T} j \cdot \phi \dd x \dd t \notag\\
& =\, \intl_{\Omega} \left( (\ve e) \cdot \phi + (\mu h) \cdot \psi \right) \dd x \bigg|_{t=0}.
\end{align*}

\qed

Thus, since $-(n \times h) \cdot \phi = (n \times \phi) \cdot h$ on $\Gamma$,
it follows that every classical solution $\{e,h\}$ of (\ref{eq:1.4}), (\ref{eq:1.5}), (\ref{eq:1.10}), (\ref{eq:1.11}) satisfies the \textit{integral identity}
\begin{align}
\label{eq:1.15}
& - \intl_{Q_T} \left( (\ve e) \cdot \del_t\phi + (\mu h) \cdot \del_t\psi \right) \dd x \dd t \notag\\
& \hspace*{1em} + \intl_{Q_T} \left( - h \cdot (\curl \phi) + e \cdot (\curl \psi) \right) \dd x \dd t
+ \intl_{Q_T} j \cdot \phi \dd x \dd t \notag\\
& =\, \intl_{\Omega} \left[ (\ve(x)e_0(x)) \cdot \phi(x,0) + (\mu(x)h_0(x)) \cdot \psi(x,0) \right] \dd x
\end{align}
for all $\{\phi,\psi\} \in C^1(\ol{Q_T})^3 \times C^1(\ol{Q_T})^3$ such that

\pagebreak

\begin{itemize}
\setlength{\itemsep}{2ex}
\item\, $\phi\,\,=\,\, \psi =\,\, 0\quad\text{ in }\Omega \times \{T\}$,
\item\, $n \times \phi\,\, =\,\, 0\quad\text{ on }\Gamma \times [0,T]$.
\end{itemize}
\qed

Integral identity (\ref{eq:1.15}) evidently continues to make sense for $\{e,h\} \in L^2(Q_T)^3 \times L^2(Q_T)^3$.
This motivates the definition of the notion of weak solution of (\ref{eq:1.4}), (\ref{eq:1.5}), (\ref{eq:1.10}), (\ref{eq:1.11}) we will give in the following section.

\section{Definition and basic properties of weak solutions of (\ref{eq:1.4}), (\ref{eq:1.5}), (\ref{eq:1.10}), (\ref{eq:1.11})}\label{s:2}

Let $\Omega \subset \R^3$ be a bounded domain.
We introduce the space
\begin{align*}
V :=\, \left\{ u \in L^2(\Omega)^3; \ \curl u \in L^2(\Omega)^3 \right\}.
\end{align*}
$V$ is a Hilbert space with respect to the scalar product
\begin{align*}
(u,v)_V := \intl_{\Omega} (u \cdot v + (\curl u) \cdot (\curl v)) \dd x
\end{align*}
We next define
\begin{align*}
V_0 :=\, \left\{ u \in V; \ \intl_{\Omega} (\curl u) \cdot z \dd x\, = \intl_{\Omega} u \cdot (\curl z) \dd x \quad \forall z \in V \right\}.
\end{align*}
To our knowledge, this space has been introduced for the first time in \cite{Leis}, and then used  in other papers, e.g., \cite{Jochmann}, \cite{Weber}.
The vector fields $u \in V_0$ satisfy the condition $n \times u = 0$ on the boundary $\Gamma = \del\Omega$ of $\Omega$
in a generalized sense.
More specifically, if $\Gamma$ is Lipschitz continuous, then there exists a linear continuous mapping $\gamma_\tau: V \to H^{-1/2}(\Gamma)^3$ such that
\begin{align*}
\gamma_\tau(u)\,\, =\,\, n \times u |_{\Gamma} \quad \forall u \in C^1(\ol{\Omega})^3,
\end{align*}
\begin{align*}
\intl_{\Omega} (\curl u) \cdot \varphi \dd x - \intl_{\Omega} u \cdot (\curl \varphi) \dd x
\,\,=\,\, \scal{\gamma_\tau(u),\varphi}_{(H^{1/2})^3} \quad \forall u \in V, \ \forall \varphi \in H^1(\Omega)^3~\text{\footnotemark}
\end{align*}
\footnotetext{$\scal{\cdot,\cdot}_{(H^{1/2})^3} = $ dual pairing between $H^{-1/2}(\Gamma)^3$ and $H^{1/2}(\Gamma)^3$ (cf. also below).}%
(cf., e.g., \cite{Amrouche}, \cite[Chap.~IX, 2.]{Dautray}, \cite[Chap.~I, Th.~2.11]{Girault}).
It follows
\begin{align*}
V_0 := \left\{ u \in V; \gamma_\tau(u)\, =\, 0 \text{ in } H^{-1/2}(\Gamma)^3 \right\}.
\end{align*}
Based upon this result, in Appendix II we give an equivalent characterization of $V_0$.

\qed

For our discussion of weak solutions of (\ref{eq:1.4}), (\ref{eq:1.5}), (\ref{eq:1.10}), (\ref{eq:1.11}) we will need spaces of functions from the interval $[0,T]$ into a real normed vector space $X$.

Let $|\cdot|_X$ denote the norm in $X$.
By $L^p(0,T;X)$ ($1 \leq p < +\infty$) we denote the vector space of all equivalence classes of strongly (Bochner) measurable functions $u: [0,T] \to X$ such that $t \mapsto |u(t)|_X$ is in $L^p(0,T)$.
A norm on $L^p(0,T;X)$ is given by
\begin{align*}
\nor{u}_{L^p(0,T;X)} :=
\begin{cases}
\,\,\displaystyle \left( \int_0^T |u(t)|_X^p \right)^{1/p} & \text{if }\, 1 \leq\, p\, < +\infty,\\[2ex]
\displaystyle \esssup\limits_{t \in ]0,T[} |u(t)|_X & \text{if }\, p\, =\, +\infty.
\end{cases}
\end{align*}
For more details see, e.g., \cite[Chap.~4]{Bourbaki}, \cite[Chap.~1]{Droniou} and \cite[Chap.~23.2, 23.3]{Zeidler}.

If $H$ denotes a real Hilbert space with scalar product $(\cdot,\cdot)_H$, then $L^2(0,T;H)$ is a Hilbert space for the scalar product
\begin{align*}
(u,v)_{L^2(0,T;H)} := \int_0^T (u(t),v(t))_H \dd t.
\end{align*}

Given $u \in L^p(Q_T)$ $(1 \leq p < +\infty)$, we define
\begin{align*}
[u](t) :=\,\ u(\cdot,t)\,\,\,\, \text{ for a.e. } t \in [0,T].
\end{align*}
By Fubini's theorem, $[u](\cdot) \in L^p(0,T)$ and
\begin{align*}
\intl_{Q_T} |u(x,t)|^p \dd x \dd t \,\,= \int_0^T \nor{[u](t)}_{L^p(\Omega)}^p \dd t.
\end{align*}
It is easy to prove that the mapping $u \mapsto [u]$ is a linear isometry from $L^p(Q_T)$ onto $L^p(0,T;L^p(\Omega))$.
Throughout our paper we identify these spaces.

\qed

For what follows, we suppose that
\begin{align}
\label{eq:2.1}
& \begin{cases}
&\text{the entries of the matrices } \ve \,=\, \ve(x), \ \mu\, =\, \mu(x)\\
&\text{are bounded measurable functions in } \Omega;
\end{cases}\\[1\baselineskip]
\label{eq:2.2}
& j \in L^2(Q_T)^3, \ e_0, h_0 \in L^2(\Omega)^3.
\end{align}
The following definition extends the integral identity (\ref{eq:1.15}) to the $L^2$-framework.
\begin{defi}
Assume (\ref{eq:2.1}), (\ref{eq:2.2}).
The pair
\begin{align*}
\{e,h\} \in L^2(Q_T)^3 \times L^2(Q_T)^3
\end{align*}
is called {\em weak solution of} (\ref{eq:1.4}), (\ref{eq:1.5}), (\ref{eq:1.10}), (\ref{eq:1.11}) if
\pagebreak
\begin{align}
\label{eq:2.3}
\begin{cases}
&\displaystyle
- \intl_{Q_T} \left( (\ve e) \cdot \del_t\phi + (\mu h) \cdot \del_t\psi \right) \dd x \dd t\\
&\displaystyle
+ \intl_{Q_T} \left( - h \cdot (\curl \phi) + e \cdot (\curl \psi) \right) \dd x \dd t
+ \intl_{Q_T} j \cdot \phi \dd x \dd t\\[5ex]
&\displaystyle
= \intl_{\Omega} \left[ (\ve(x)e_0(x)) \cdot \phi(x,0) + (\mu(x)h_0(x)) \cdot \psi(x,0) \right] \dd x\\[5ex]
&\text{for all } \{\phi,\psi\} \in L^2(0,T;V_0) \times L^2(0,T;V) \text{ such that }\\
&\del_t\phi, \del_t\psi \in L^2(Q_T)^3, \ \phi(\cdot,T) = \psi(\cdot,T) = 0 \text{ a.e. in } \Omega.
\end{cases}
\end{align}
\end{defi}
From our discussion in Section \ref{s:1} it follows that every classical solution of (\ref{eq:1.4}), (\ref{eq:1.5}), (\ref{eq:1.10}), (\ref{eq:1.11}) is a weak solution of this problem. We notice that (\ref{eq:2.3}) basically coincides with the definition of weak solutions of initial-boundary value problems for the Maxwell equations
that is introduced in \cite[Chap.~VII,4.2]{Duvaut}, \cite[p.~326]{Fabrizio} and \cite{Jochmann}.

Existence theorems for weak solutions of (1.4), (1.5), (1.10), (1.11) are presented in [7, Chap. VII,4.3)], [8], [9, Section 7.8.3] and [13].

\qed

\textit{Existence of the distributional derivatives $(\ve e)'$,$(\mu h)'$.}
We introduce more notations.

Let $X$ be a real normed vector space with norm $|\cdot|_X$.
By $X^*$ we denote the dual space of $X$, and by $\scal{x^*,x}_X$ the dual pairing between $x^* \in X^*$ and $x \in X$. Let $H$ be a real Hilbert space with scalar product $(\cdot,\cdot)_H$ such that $X \subset H$ continuously and densely.
Identifying $H$ with its dual space $H^*$ via the Riesz representation theorem, we obtain
\begin{align}
\label{eq:2.4}
H \subset X^* \text{ continuously}, \quad \scal{z,x}_X\, =\, (z,x)_H \quad \forall z \in H, \ \forall x \in X
\end{align}
(cf., e.g., \cite[Chap.~23.4]{Zeidler}).
If $X$ is reflexive, then $H \subset X^*$ densely.

Next, let $X$ and $Y$ be two real normed vector spaces such that $X \subset Y$ continuously and densely.
Given $u \in L^1(0,T;X)$, we identify $u$ with an element in $L^1(0,T;Y)$ and denote this element again by $u$.
If there exists $U \in L^1(0,T;Y)$ such that
\begin{align*}
\int_0^T \dot{\zeta}(t) u(t) \dd t \,\stackrel{\text{in } Y}{=} - \int_0^T \zeta(t) U(t) \dd t \quad \forall \zeta \in C_c^\infty(]0,T[),
\end{align*}
then $U$ will be called the {\em derivative of $u$ in the sense of distributions from $]0,T[$ into $Y$} and denoted by
\begin{align*}
u' :=\, U
\end{align*}
(see, e.g., \cite[Appendice]{Brezis}, \cite[Chap.~21]{Droniou}, \cite[Chap.~1.3]{Lions} and \cite[Chap.~23.5, 23.6]{Zeidler}).
$u'$ is uniquely determined by $u$.

The existence of the distributional derivative $u' \in L^1(0,T;Y)$ is equivalent to the existence of a function $\tilde{u}: [0,T] \to Y$ such that
\begin{itemize}
\setlength{\itemsep}{2ex}
\item\, $\displaystyle \tilde{u}(t)\, \stackrel{\text{in } Y}{=} u(t)\,\,\,\,$ for a.e. $t \in [0,T]$,
\item\, $\displaystyle \tilde{u}(t)\, =\, \tilde{u}(0) + \int_0^t u(s) \dd s \quad \forall t \in [0,T]$,
\end{itemize}
i.e., $\tilde{u}$ is the absolutely continuous representative of equivalence class $u$ (cf.~\cite[Appendice]{Brezis}).

Let $X$ and $H$ be as above, $X \subset H$ continuouly and densely.
Let be $u \in L^2(0,T;H)$ such that $u' \in L^2(0,T;X^*)$. For the proof of Theorem \ref{thm:1} we need the following formula of integration by parts
\begin{align}
\label{eq:2.5}
&\scal{\int_0^T \alpha(t) u'(t) \dd t, \varphi}_X \notag\\
&=\,\, \scal{\alpha(T) \tilde{u}(T), \varphi}_X
- \scal{\alpha(0) \tilde{u}(0), \varphi}_X
- \int_0^T \left( \tilde{u}(t), \dot{\alpha}(t) \varphi \right)_H \dd t
\end{align}
for all $\varphi \in X$ and all $\alpha \in C^1([0,T])$ ($\tilde{u}$ denotes the absolutely continuous representative of $u$ with values in $X^*$).
This formula is easily seen by combining the equation
\begin{align*}
\int_0^T \left( \alpha(t) u'(t) + \dot{\alpha}(t) u(t) \right) \dd t\, \stackrel{\text{in } X^*}{=} \alpha(T) \tilde{u}(T) - \alpha(0) \tilde{u}(0)
\end{align*}
and the equation in (\ref{eq:2.4}).

\qed

We consider the following special cases for $X$ and $H$:
\begin{align*}
X = V \quad (\text{resp. } X = V_0), \quad H = L^2(\Omega)^3,
\end{align*}
It follows
\begin{align*}
& L^2(\Omega)^3 \subset V^* \quad (\text{resp. } L^2(\Omega)^3 \subset V_0^*) \quad continuously,\ densely\\[1\baselineskip]
& \scal{z,u}_V\,\, =\,\, (z,u)_{L^2(\Omega)^3} \quad \forall z \in L^2(\Omega)^3, \quad \forall u \in V\\[1\baselineskip]
& (\text{resp. } \scal{z,u}_{V_0}\,\, =\,\, (z,u)_{L^2(\Omega)^3} \quad \forall z \in L^2(\Omega)^3, \quad \forall u \in V_0).
\end{align*}

Without any further reference, in what follows we suppose that $V$ is separable \footnote{The Lipschitz continuity of the boundary $\Gamma$ is sufficient for the density of $C^1(\ol{\Omega})^3$ in $V$.}.
\begin{theorem}
\label{thm:1}
For every weak solution $\{e,h\}$ of (\ref{eq:1.4}), (\ref{eq:1.5}), (\ref{eq:1.10}), (\ref{eq:1.11}) there exist the distributional derivatives
\begin{align}
\label{eq:2.6}
(\ve e)' \in L^2(0,T;V_0^*), \quad (\mu h)' \in L^2(0,T;V^*)
\end{align}
and there holds
\begin{align}
\label{eq:2.7}
& \begin{cases}
&\displaystyle
\scal{(\ve e)'(t),\varphi}_{V_0} + \intl_{\Omega} (-h(x,t) \cdot \curl \varphi(x) + j(x,t) \cdot \varphi(x)) \dd x\,\, =\,\, 0\\
&\text{for a.e. } t \in [0,T] \text{ and all } \varphi \in V_0,
\end{cases}\\[2ex]
\label{eq:2.8}
& \begin{cases}
&\displaystyle
\scal{(\mu h)'(t),\psi}_V + \intl_{\Omega} e(x,t) \cdot \curl \psi(x) \dd x\,\, =\,\, 0\\
&\text{for a.e. } t \in [0,T] \text{ and all } \psi \in V.
\end{cases}
\end{align}
Moreover,
\begin{align}
\label{eq:2.9}
& (\ve e) \in C([0,T];V_0^*), \quad (\mu h) \in C([0,T];V^*),\\[1\baselineskip]
\label{eq:2.10}
& (\ve e)(0)\,\, =\,\, \ve e_0\,\,\, \text{ in } V_0^*, \quad (\mu h)(0)\,\, =\,\, \mu h_0\,\,\, \text{ in } V^*.
\end{align}
\end{theorem}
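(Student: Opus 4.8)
The plan is to produce explicit candidates for the two distributional derivatives, to check that they lie in the spaces claimed in (\ref{eq:2.6}), to identify them as the derivatives by inserting separated test functions into (\ref{eq:2.3}), and finally to read off the continuity (\ref{eq:2.9}) and the initial values (\ref{eq:2.10}) from the integration-by-parts apparatus set up before (\ref{eq:2.5}).

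First I would define, for a.e.\ $t \in [0,T]$, functionals $F(t) \in V_0^*$ and $G(t) \in V^*$ by
\begin{align*}
\scal{F(t),\varphi}_{V_0} := \intl_\Omega \big( h(x,t) \cdot \curl\varphi(x) - j(x,t) \cdot \varphi(x) \big) \dd x, \qquad \scal{G(t),\chi}_V := -\intl_\Omega e(x,t) \cdot \curl\chi(x) \dd x,
\end{align*}
for $\varphi \in V_0$ and $\chi \in V$. Because $\nor{\curl u}_{L^2(\Omega)^3} \le \nor{u}_V$ and $\nor{u}_{L^2(\Omega)^3} \le \nor{u}_V$, these functionals are bounded, with $\nor{F(t)}_{V_0^*} \le \nor{h(\cdot,t)}_{L^2(\Omega)^3} + \nor{j(\cdot,t)}_{L^2(\Omega)^3}$ and $\nor{G(t)}_{V^*} \le \nor{e(\cdot,t)}_{L^2(\Omega)^3}$; by (\ref{eq:2.2}) the right-hand sides belong to $L^2(0,T)$, and the Bochner measurability of $F$ and $G$ is inherited from that of $h,j,e$, so $F \in L^2(0,T;V_0^*)$ and $G \in L^2(0,T;V^*)$. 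These are the candidates for $(\ve e)'$ and $(\mu h)'$.

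Next I would test (\ref{eq:2.3}) with the separated pair $\phi(x,t) = \zeta(t)\varphi(x)$, $\psi = 0$, where $\zeta \in C_c^\infty(\,]0,T[\,)$ and $\varphi \in V_0$, and then with $\phi = 0$, $\psi(x,t) = \zeta(t)\chi(x)$, $\chi \in V$. Each such pair is admissible and, since $\zeta$ vanishes near the endpoints, the right-hand side of (\ref{eq:2.3}) disappears. The first choice collapses (\ref{eq:2.3}) to
\begin{align*}
\intl_0^T \dot\zeta(t) \Big( \intl_\Omega (\ve e)(x,t) \cdot \varphi(x) \dd x \Big) \dd t = -\intl_0^T \zeta(t) \scal{F(t),\varphi}_{V_0} \dd t,
\end{align*}
which, on recalling $\scal{z,\varphi}_{V_0} = (z,\varphi)_{L^2(\Omega)^3}$ for $z \in L^2(\Omega)^3$, is exactly the assertion that $F = (\ve e)'$ as a $V_0^*$-valued distribution; since two elements of $V_0^*$ that agree on all of $V_0$ coincide, no density step is needed. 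This gives the first half of (\ref{eq:2.6}) together with (\ref{eq:2.7}), and the symmetric choice gives $G = (\mu h)'$, hence the second half of (\ref{eq:2.6}) and (\ref{eq:2.8}).

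Viewing $\ve e \in L^2(0,T;L^2(\Omega)^3) \subset L^2(0,T;V_0^*)$ with derivative in $L^2(0,T;V_0^*)$, the equivalence recorded before (\ref{eq:2.5}) supplies an absolutely continuous $V_0^*$-valued representative, so $\ve e \in C([0,T];V_0^*)$, and likewise $\mu h \in C([0,T];V^*)$: this is (\ref{eq:2.9}). For (\ref{eq:2.10}) I would apply (\ref{eq:2.5}) with $X = V_0$, $H = L^2(\Omega)^3$, $u = \ve e$, $u' = F$, and a scalar $\alpha \in C^1([0,T])$ with $\alpha(T) = 0$ and $\alpha(0) = 1$, tested against $\varphi \in V_0$; this expresses $\scal{\tilde u(0),\varphi}_{V_0}$ through the interior integrals of $\ve e$, $h$, $j$. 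Testing (\ref{eq:2.3}) with the admissible pair $\phi = \alpha\varphi$, $\psi = 0$ produces the same interior integrals but with right-hand side $\alpha(0)\scal{\ve e_0,\varphi}_{V_0}$; equating the two forces $\scal{\tilde u(0),\varphi}_{V_0} = \scal{\ve e_0,\varphi}_{V_0}$ for every $\varphi \in V_0$, i.e.\ $(\ve e)(0) = \ve e_0$ in $V_0^*$, and the analogous computation gives $(\mu h)(0) = \mu h_0$ in $V^*$. I expect this last matching of boundary terms to be the only delicate point: one must keep $\alpha(T) = 0$ so that the terminal contribution $\alpha(T)\tilde u(T)$ in (\ref{eq:2.5}) is suppressed while retaining $\alpha(0) \neq 0$, so that the boundary term produced by (\ref{eq:2.5}) and the initial datum appearing in (\ref{eq:2.3}) can be compared cleanly.
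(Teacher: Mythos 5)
Your proposal is correct and follows essentially the same route as the paper: separated test functions $\zeta(t)\varphi(x)$ in (\ref{eq:2.3}) to identify the distributional derivatives, the absolutely continuous representative for (\ref{eq:2.9}), and the comparison of (\ref{eq:2.5}) with $\alpha(0)=1$, $\alpha(T)=0$ against (\ref{eq:2.3}) tested with $\alpha\varphi$ for (\ref{eq:2.10}). The only cosmetic difference is that you define the functionals $F(t)$, $G(t)$ pointwise in $t$, whereas the paper first builds $\mcF\in (L^2(0,T;V_0))^*$ and invokes the isometry $(L^2(0,T;V_0))^*\cong L^2(0,T;V_0^*)$; both handle the measurability issue adequately.
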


\begin{remark}
\label{r:2}
\begin{enumerate}[itemindent=1.3em,leftmargin=0em,itemsep=2ex,labelsep=1ex,topsep=1ex,label=\arabic*. ]
\item Let $\mcN$ denote the set of those $t \in [0,T]$ for which the equation in (\ref{eq:2.7}) fails.
\label{r:2.1}
Then $\mes\mcN = 0$, and $\mcN$ does not depend on $\varphi \in V_0$.
This follows from the separability of $V_0$.
An analogous observation is true for (\ref{eq:2.8}).
\item In (\ref{eq:2.9}) and (\ref{eq:2.10}), the absolutely continuous representatives of $\ve e$ and $\mu h$ with respect to the norms in $V_0^*$ and $V^*$, respectively, are understood.
\end{enumerate}
\end{remark}

\begin{proof}[Proof of Theorem \ref{thm:1}]
We identify $\ve e$ with an element in $L^2(0,T;V_0^*)$ and prove the existence of the distributional derivative $(\ve e)' \in L^2(0,T;V_0^*)$.

We define $\mcF \in (L^2(0,T;V_0))^*$ by
\begin{align*}
\scal{\mcF,\phi}_{L^2(V_0)} :=\, \intl_{Q_T} (-h \cdot \curl \phi + j \cdot \phi) \dd x \dd t, \quad \phi \in L^2(0,T;V_0)~\text{\footnotemark}.
\end{align*}
\footnotetext{If there is no danger of confusion, for indices we write $L^p(X)$ in place of $L^p(0,T;X)$.}%
The linear isometry $(L^2(0,T;V_0))^* \cong L^2(0,T;V_0^*)$ permits to identify $\mcF$ with its isometric image in $L^2(0,T;V_0^*)$ which we again denote by $\mcF$.
Thus,
\begin{align*}
\scal{\mcF,\phi}_{L^2(V_0)} =\, \int_0^T \scal{\mcF(t),\phi(t)}_{V_0} \dd t \quad \forall \phi \in L^2(0,T;V_0).
\end{align*}

Given any $\varphi \in V_0$ and $\zeta \in C_c^\infty(]0,T[)$, in (\ref{eq:2.3}) we take $\phi(x,t) = \varphi(x) \zeta(t)$, $\psi(x,t) = 0$ for a.e. $(x,t) \in Q_T$.
It follows
\begin{align*}
\scal{\int_0^T \dot{\zeta}(t) (\ve e)(t),\varphi}_{V_0} \dd t
&\,\, =\, \int_0^T \scal{\dot{\zeta}(t) (\ve e)(t),\varphi}_{V_0} \dd t & \text{(cf.~\cite[p.~421]{Zeidler})}\\[1\baselineskip]
&\,\, =\, \int_0^T \left( \dot{\zeta}(t) (\ve e)(t),\varphi \right)_{L^2(\Omega)^3} \dd t & \text{(by (\ref{eq:2.4}))}\\[1\baselineskip]
&\,\, =\, \intl_{Q_T} (-h \cdot \curl \phi + j \cdot \phi) \dd x \dd t & \text{(by (\ref{eq:2.3}))}\\[1\baselineskip]
&\,\, =\, \scal{\int_0^T \zeta(t) \mcF(t) \dd t,\varphi}_{V_0},
\end{align*}
i.e., $\ve e \in L^2(0,T;L^2(\Omega)^3)$ (when identified with an element in $L^2(0,T;V_0^*)$) possesses the distributional derivative
\begin{align*}
(\ve e)'\,\, =\,\, -\,\mcF \in L^2(0,T;V_0^*),
\end{align*}
and there holds
\begin{align}
\label{eq:2.11}
& \int_0^T \scal{(\ve e)'(t),\varphi}_{V_0} \zeta(t) \dd t \notag\\
& \hspace*{1em} =\, - \int_0^T \scal{(\mcF(t),\varphi \zeta(t)}_{V_0} \dd t \notag\\
& \hspace*{1em} =\, \intl_0^T \left( \intl_{\Omega} (h(x,t) \cdot \curl \varphi(x) - j(x,t) \cdot \varphi(x)) \dd x \right) \zeta(t) \dd t
\end{align}
for any $\varphi \in V_0$ and $\zeta \in C_c^\infty(]0,T[)$.
Thus, by a routine argument, (\ref{eq:2.11}) is equivalent to the equation in (\ref{eq:2.7}) where the set of measure zero of those $t \in [0,T]$ for which this equation is not true,
does not depend on $\varphi$ (cf. Remark \ref{r:2}, \ref{r:2.1}).
By an analogous reasoning one proves $(\mu h)' \in L^2(0,T;V^*)$.

To prove the first equation in (\ref{eq:2.10}), fix $\alpha \in C^1([0,T])$ with $\alpha(0) = 1$, $\alpha(T) = 0$.
We multiply the equation in (\ref{eq:2.7}) by $\alpha(t)$ and integrate over $[0,T]$.
Thus, for any $\varphi \in V_0$,
\begin{align}
\label{eq:2.12}
& \int_0^T \scal{(\ve e)'(t),\varphi \alpha(t)}_{V_0} \dd t \notag\\
& =\, \intl_{Q_T} (h(x,t) \cdot \curl \varphi(x) - j(x,t) \cdot \varphi(x)) \alpha(t) \dd x \dd t \notag\\
& =\, - \intl_{Q_T} (\ve e) \cdot \varphi \dot{\alpha} \dd x \dd t - \intl_{\Omega} (\ve(x) e_0(x)) \cdot \varphi(x) \dd x
\end{align}
On the other hand, the formula of integration by parts (\ref{eq:2.5}) reads
\begin{align}
\label{eq:2.13}
\int_0^T \scal{(\ve e)'(t),\varphi \alpha(t)}_{V_0} \dd t
\,\,=\,\, - \scal{(\widetilde{\ve e})(0),\varphi}_{V_0} - \intl_{Q_T} (\ve e) \cdot \varphi \dot{\alpha} \dd x \dd t.
\end{align}
Combining (\ref{eq:2.12}) and (\ref{eq:2.13}) gives
\begin{align*}
\scal{\ve e_0,\varphi}_{V_0}\,\, =\,\, \scal{(\widetilde{\ve e})(0),\varphi}_{V_0},
\end{align*}
i.e., the first equation in (\ref{eq:2.10}) holds in the sense of $V_0^*$.
An analogous argument gives the second equation in (\ref{eq:2.10}).

\end{proof}

\section{An energy equality}\label{s:3}

For $\{e,h\} \in L^2(Q_T)^3 \times L^2(Q_T)^3$ we define

\begin{align*}
E(t)\,\, =\,\, \frac{1}{2} \intl_{\Omega} \left[ (\ve(x)e(x,t)) \cdot e(x,t) + (\mu(x)h(x,t)) \cdot h(x,t) \right] \dd x \quad \text{for a.e. } t \in [0,T]
\end{align*}
(c.f. Section \ref{s:1.2}).
Then $E \in L^1(0,T).$

The main result of our paper is the following

\begin{theorem}
\label{thm:2}
Assume (\ref{eq:2.1}), (\ref{eq:2.2}) and
\begin{align}
\label{eq:3.1}
\text{$\ve(x)$, $\mu(x)$ are symmetric non-negative matrices for all $x \in \Omega$.}
\end{align}
Then, for every weak solution $\{e,h\}$ of (\ref{eq:1.4}), (\ref{eq:1.5}), (\ref{eq:1.10}), (\ref{eq:1.11}), there exists an absolute continuous function
\begin{align*}
\widetilde{E}: [0,T] \to [0,+\infty[
\end{align*}
such that
\begin{align}
\label{eq:3.2}
&\widetilde{E}(t)\,\, =\,\, E(t) \quad \text{for a.e. } t \in [0,T],\\
\label{eq:3.3}
&\widetilde{E}(t)\,\, =\,\, \widetilde{E}(s) - \intl_s^t \intl_{\Omega} j \cdot e \dd x \dd \tau \quad \forall s,t \in [0,T],\ s < t~\text{\footnotemark},\\
\label{eq:3.4}
&\max_{t \in [0,T]} \widetilde{E}(t)\,\, \leq\,\, \widetilde{E}(0) + \nor{j}_{L^2(Q_T)^3} \nor{e}_{L^2(Q_T)^3},\\
\label{eq:3.5}
&E(t)\,\, =\,\, E(s) - \intl_s^t \intl_{\Omega} j \cdot e \dd x \dd \tau \quad \text{for a.e. } s,t \in [0,T],\ s < t.
\end{align}
\footnotetext{Cf.~(\ref{eq:1.12}).}
\end{theorem}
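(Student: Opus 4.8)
The plan is to regularise $\{e,h\}$ in time by its Steklov mean and to perform, on the regularised fields, the energy computation that is only formal for $\{e,h\}$ itself. For $u\in L^2(Q_T)^3$ and $\eta>0$ I set $u_\eta(\cdot,t):=\frac1\eta\intl_t^{t+\eta}u(\cdot,\tau)\dd\tau$ for $t\in[0,T-\eta]$. Since $\ve,\mu$ are independent of $t$, the Steklov mean commutes with them, $(\ve e)_\eta=\ve e_\eta$ and $(\mu h)_\eta=\mu h_\eta$, and the difference-quotient formula $\frac{\dd}{\dd t}(\ve e)_\eta(t)=\frac1\eta\big((\ve e)(t+\eta)-(\ve e)(t)\big)$ shows that $(\ve e)_\eta,(\mu h)_\eta$ now possess derivatives in $L^2(\Omega)^3$ (not merely in $V_0^*$, $V^*$). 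Averaging (\ref{eq:2.7}), (\ref{eq:2.8}) over $[t,t+\eta]$ then gives, for a.e.\ $t$ and all $\varphi\in V_0$, $\psi\in V$,
\begin{align*}
\intl_\Omega (\ve e)_\eta'(t)\cdot\varphi\dd x &= \intl_\Omega\big(h_\eta\cdot\curl\varphi-j_\eta\cdot\varphi\big)\dd x,\\
\intl_\Omega (\mu h)_\eta'(t)\cdot\psi\dd x &= -\intl_\Omega e_\eta\cdot\curl\psi\dd x.
\end{align*}

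The step I expect to be the crux is to notice that these averaged identities do not merely reproduce (\ref{eq:2.7}), (\ref{eq:2.8}): because their left-hand sides are now genuine $L^2(\Omega)^3$ pairings, they upgrade the \emph{spatial} regularity of the Steklov means. Testing the first identity with $\varphi\in C_c^\infty(\Omega)^3$ identifies $\curl h_\eta=(\ve e)_\eta'+j_\eta\in L^2(\Omega)^3$, so $h_\eta\in V$; testing the second with $\psi\in C_c^\infty(\Omega)^3$ gives $\curl e_\eta=-(\mu h)_\eta'\in L^2(\Omega)^3$, so $e_\eta\in V$. Moreover, the second identity holds for \emph{all} $\psi\in V$, so inserting $\curl e_\eta=-(\mu h)_\eta'$ into the Green formula of Section \ref{s:2} forces $\gamma_\tau(e_\eta)=0$, i.e.\ $e_\eta\in V_0$. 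This is precisely the regularity that is unavailable for $e,h$ themselves, and it is what makes the Poynting cancellation rigorous.

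With $e_\eta\in V_0$ and $h_\eta\in V$ in hand I take $\varphi=e_\eta(t)$ and $\psi=h_\eta(t)$ in the two averaged identities and add them. The duality defining $V_0$ yields $\intl_\Omega\big(h_\eta\cdot\curl e_\eta-e_\eta\cdot\curl h_\eta\big)\dd x=0$, so the curl contributions drop out and
\begin{align*}
\intl_\Omega (\ve e)_\eta'(t)\cdot e_\eta(t)\dd x+\intl_\Omega (\mu h)_\eta'(t)\cdot h_\eta(t)\dd x = -\intl_\Omega j_\eta(t)\cdot e_\eta(t)\dd x .
\end{align*}
On the other hand, writing $E_\eta(t):=\frac12\intl_\Omega\big((\ve e_\eta)\cdot e_\eta+(\mu h_\eta)\cdot h_\eta\big)\dd x$, the symmetry of $\ve,\mu$ together with the product rule (legitimate since the averaged fields lie in $H^1(0,T-\eta;L^2(\Omega)^3)$) gives $\frac{\dd}{\dd t}E_\eta=\intl_\Omega (\ve e)_\eta'\cdot e_\eta\dd x+\intl_\Omega (\mu h)_\eta'\cdot h_\eta\dd x$. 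Hence $E_\eta$ is absolutely continuous with $E_\eta'(t)=-\intl_\Omega j_\eta\cdot e_\eta\dd x$, and integration over $[s,t]$ yields the energy equality for the regularised fields.

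It then remains to let $\eta\to0$. From $e_\eta\to e$, $h_\eta\to h$, $j_\eta\to j$ in $L^2(Q_T)^3$ and the boundedness of $\ve,\mu$ I obtain $E_\eta\to E$ in $L^1(0,T)$ and $\intl_s^t\intl_\Omega j_\eta\cdot e_\eta\to\intl_s^t\intl_\Omega j\cdot e$, which establishes (\ref{eq:3.5}) for a.e.\ $s<t$. Since $\tau\mapsto\intl_\Omega j\cdot e\dd x$ lies in $L^1(0,T)$, fixing a Lebesgue point $s_0$ and setting $\widetilde E(t):=E(s_0)-\intl_{s_0}^t\intl_\Omega j\cdot e\dd x\dd\tau$ defines an absolutely continuous representative satisfying (\ref{eq:3.2}) and (\ref{eq:3.3}); its non-negativity follows from (\ref{eq:3.1}), and (\ref{eq:3.4}) results from taking $s=0$ in (\ref{eq:3.3}) and bounding the right-hand side by the Cauchy--Schwarz inequality.
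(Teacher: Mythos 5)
Your proposal is correct and follows essentially the same route as the paper: Steklov regularization in time, upgrading the spatial regularity of the means so that $e_\eta\in V_0$, $h_\eta\in V$ become admissible test functions, cancellation of the curl terms via the defining duality of $V_0$, the energy identity for the regularized fields, and passage to the limit using Proposition \ref{prop:II}. The only (immaterial) differences are that you start from the pointwise identities (\ref{eq:2.7}), (\ref{eq:2.8}) rather than from the weak formulation (\ref{eq:2.3}) with the special test functions, and that you integrate over $[s,t]$ and pass to a.e.\ limits instead of testing against $\zeta\in C_c^\infty(]0,T[)$ and invoking the absolutely continuous representative.
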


\noindent\textit{Proof}. We make use of a well-known technique for proving energy inequalities for weak solutions of parabolic equations
by regularizing these solutions in time by the Steklov mean (see, e.g.,~\cite[Chap.~III, §2]{LSU}).
We divide the proof into three parts.
\vspace*{1\baselineskip}

\noindent\textit{$1^\circ$ Integral identities for the Steklov mean of $\{e,h\}$.}\\[1ex]
We extend $\{e,h\}$ by zero for a.e. $(x,t) \in \Omega \times (\R \setminus [0,T])$ and denote this extension by $\{e,h\}$ again.

Let be $\alpha \in C^\infty(\R)$ with $\supp(\alpha) \subset\, ]0,T[$, i.e. there exists $0 < t_0 < t_1 < T$ such that $\alpha(t) = 0$ for all $t \in \, ]-\infty,t_0[ \, \cup \, ]t_1,+\infty[$.
Given $\varphi \in V_0$, for $0 < \lambda < T-t_1$ we consider the function
\begin{align*}
\phi(x,t)\,\, =\,\, \varphi(x) \int_{t-\lambda}^t \alpha(s) \dd s \quad \text{for a.e.} (x,t) \in Q_T.
\end{align*}
Then
\begin{align*}
& \phi(\cdot,t) \in V_0 \quad \text{for all}\,\, t \in [0,T], \quad \phi(x,0)\, =\, \phi(x,T)\, = 0\, \quad \text{for a.e. } x \in \Omega,\\[1\baselineskip]
& \del_t \phi(x,t)\,\, =\,\, \varphi(x) (\alpha(t) - \alpha(t-\lambda)) \quad \text{for a.e. } (x,t) \in Q_T.
\end{align*}\\
Inserting this $\phi$ and the function $\psi = 0$ into (\ref{eq:2.3}) yields

\pagebreak

\begin{align}
\label{eq:3.6}
& \intl_{Q_T} [(\ve e)(x,t+\lambda) - (\ve e)(x,t)] \cdot \varphi(x) \alpha(t) \dd x \dd t \notag\\
& \hspace*{1em} -\, \intl_{Q_T} h(x,t) \cdot \curl \varphi(x) \left( \int_{t-\lambda}^t \alpha(s) \dd s \right) \dd x \dd t \notag\\[2ex]
& \hspace*{1em} +\, \intl_{Q_T} j(x,t) \cdot \varphi(x) \left( \int_{t-\lambda}^t \alpha(s) \dd s \right) \dd x \dd t \notag\\[2ex]
& =\,\, 0
\end{align}
We divide each term of this equation by $\lambda$ ($0 < \lambda < T-t_1$) and make use of Prop. I.1.2, 1.3 (Appendix I).
Then (\ref{eq:3.6}) reads
\begin{align}
\label{eq:3.7}
\intl_{Q_T} \left( \left( \frac{\del}{\del t} (\ve e)_\lambda(x,t) \right) \cdot \varphi(x) - h_\lambda(x,t) \cdot \curl \varphi(x) + j_\lambda(x,t) \cdot \varphi(x) \right) \alpha(t) \dd x \dd t\,\, =\,\, 0,
\end{align}
where
\begin{align*}
f_\lambda(x,t)\, =\, \frac{1}{\lambda} \int_t^{t+\lambda} f(x,s) \dd s, \quad \lambda > 0,~(x,t) \in Q_T
\end{align*}
denotes the \textit{Steklov mean} of $f \in L^p(Q_T)$ ($1 \leq p < +\infty$) (cf.~\cite[Chap.~II, §4]{LSU} and Appendix I below for details).

By an analogous reasoning we conclude from (\ref{eq:2.3}) that
\begin{align}
\label{eq:3.8}
\intl_{Q_T} \left( \left( \frac{\del}{\del t} (\mu h)_\lambda(x,t) \right) \cdot \psi(x) + e_\lambda(x,t) \cdot \psi(x) \right) \alpha(t) \dd x \dd t\,\, =\,\, 0,
\end{align}
for all $\psi \in V$ and all $0 < \lambda < T-t_1$.

To proceed further, we take any sequence $(\lambda_m)_{m \in \N}$ ($0 < \lambda_m < T-t_1$) such that $\lambda_m \to 0$ as $m \to \infty$.
By a routine argument (cf. Section \ref{s:1.3}), from (\ref{eq:3.7}) and (\ref{eq:3.8}) with $\lambda = \lambda_m$ ($m$ fixed) it follows that
\begin{align}
\label{eq:3.9}
&\begin{cases}
&\displaystyle \intl_{\Omega} \left( \left( \frac{\del}{\del t} (\ve e)_{\lambda_m}(x,t) \right) \cdot \varphi(x) - h_{\lambda_m}(x,t) \cdot \curl \varphi(x)
 + j_{\lambda_m}(x,t) \cdot \varphi(x) \right) \dd x\, =\, 0\\[5ex]
&\text{for a.e. } t \in [0,T], \text{ for all } \varphi \in V_0 \text{ and all } m \in \N,
\end{cases}\hspace*{-0.6cm}\\[2ex]
\label{eq:3.10}
&\begin{cases}
&\displaystyle \intl_{\Omega} \left( \left( \frac{\del}{\del t} (\mu h)_{\lambda_m}(x,t) \right) \cdot \psi(x) + e_{\lambda_m}(x,t) \cdot \curl \psi(x) \right) \dd x\,\, =\,\, 0\\[5ex]
&\text{for a.e. } t \in [0,T], \text{ for all } \psi \in V \text{ and all } m \in \N,
\end{cases}\hspace*{-0.6cm}
\end{align}
respectively.
We note that the set of measure zero of those $t \in [0,T]$ for which both (\ref{eq:3.9}) and (\ref{eq:3.10}) fail, is independent of $\varphi \in V_0$, $\psi \in V$ and $m \in \N$.
\vspace*{1\baselineskip}

\noindent\textit{$2^\circ$ $e_\lambda(\cdot,t) \in V_0$, $h_\lambda(\cdot,t) \in V$ for all $0 < \lambda < T-t_1$,
for a.e. $t \in [0,T]$ and all $m \in \N$\footnote{For notational simplicity, in the present part $2^\circ$ of our proof we omit the index $m$ at $\lambda_m$.}.}\\[1ex]
Indeed, (\ref{eq:3.10}) implies
\begin{align*}
\left| \int_{\Omega} e_\lambda(x,t) \cdot \curl \psi(x) \dd x \right|\,\, \leq\,\, \nor{\frac{\del}{\del t}(\mu h)_\lambda(\cdot,t)}_{L^2(\Omega)^3} \nor{\psi}_{L^2(\Omega)^3}
\end{align*}
for all $\psi \in V$.
Thus, by  App. II, Prop. II.1, $e_\lambda(\cdot,t) \in V_0$, i.e., $\curl e_\lambda(\cdot,t) \in L^2(\Omega)^3$ and
\begin{align}
\label{eq:3.11}
\intl_{\Omega} (\curl e_\lambda(x,t)) \cdot \psi(x) \dd x \,\,=\,\, \intl_{\Omega} e_\lambda(x,t) \cdot \curl \psi(x) \dd x \quad \forall \psi \in V
\end{align}
(for a.e. $t \in [0,T]$ and all $m \in N$).
The claim $\curl h_\lambda(\cdot,t) \in L^2(\Omega)^3$ is readily seen by using  Riesz' representation theorem.

We now insert $\psi = h_\lambda(\cdot,t)$ into (\ref{eq:3.11}) and obtain
\begin{align*}
\intl_{\Omega} (\curl e_\lambda(x,t)) \cdot h_\lambda(x,t) \dd x\,\, =\, \intl_{\Omega} e_\lambda(x,t) \cdot \curl h_\lambda(x,t) \dd x.
\end{align*}
On the other hand, since
\begin{align*}
(\ve e)_\lambda(x,t)\,\, =\,\, \ve(x) e_\lambda(x,t), \quad (\mu h)_\lambda(x,t)\,\, =\,\, \mu(x) h_\lambda(x,t) \quad \text{for a.e. } (x,t) \in Q_T,
\end{align*}
we obtain by virtue of the symmetry of $\ve(x)$ and $\mu(x)$
\begin{align}
\label{eq:3.12}
& \frac{1}{2} \frac{\dd}{\dd t} \intl_{\Omega} \left[ (\ve(x) e_\lambda(x,t)) \cdot e_\lambda(x,t) + (\mu(x) h_\lambda(x,t)) \cdot h_\lambda(x,t) \right] \dd x \notag\\
& \hspace*{1em} + \intl_{\Omega} j_\lambda(x,t) \cdot e_\lambda(x,t) \dd x \notag\\
& =\,\, 0
\end{align}
for a.e. $t \in [0,T]$ and all $m \in \N$.

Finally, given any $\zeta \in C_c^\infty(]0,T[)$, we multiply (\ref{eq:3.12}) by $\zeta(t)$ and carry out an integration by parts of the first integral on the left hand side.
Thus,
\begin{align}
\label{eq:3.13}
& -\frac{1}{2} \intl_0^T \left( \intl_{\Omega} \left[ (\ve(x) e_\lambda(x,t)) \cdot e_\lambda(x,t) + (\mu(x) h_\lambda(x,t)) \cdot h_\lambda(x,t) \right] \dd x \right) \dot{\zeta}(t) \dd t \notag\\[0.7\baselineskip]
& \hspace*{1em} + \intl_0^T \left( \intl_{\Omega} j_\lambda(x,t) \cdot e_\lambda(x,t) \dd x \right) \zeta(t) \dd t \notag\\[0.7\baselineskip]
& = \,\,0
\end{align}
for all $m \in \N$.

\pagebreak

\noindent\textit{$3^\circ$ Passing to limits $m \to \infty$.}\\[1ex]
Observing that
\begin{align*}
e_{\lambda_m} \to e, \quad h_{\lambda_m} \to h, \quad j_{\lambda_m} \to j \quad \text{in } L^2(Q_T)^3 \quad\text{ as } \,m \to \infty
\end{align*}
(cf. App. I, Prop. I.2), the passage to limits $m \to \infty$ in (\ref{eq:3.13}) (with $\lambda = \lambda_m$) gives
\begin{align*}
- \int_0^T E(t) \dot{\zeta}(t) \dd t + \int_0^T \left( \intl_{\Omega} j(x,t) \cdot e(x,t) \dd x \right) \zeta(t) \dd t\,\, =\,\, 0 \quad \forall \zeta \in C_c^\infty(]0,T[).
\end{align*}
It follows that the equivalence class $E \in L^1(0,T)$ possesses an absolutely continuous representative $\widetilde{E}: [0,T] \to [0,+\infty[$ such that
\begin{align*}
\widetilde{E}(t)\,\, =\,\, \widetilde{E}(s) - \intl_s^t \intl_{\Omega} j(x,t) \cdot e(x,t) \dd x \dd t \quad \forall s, t \in [0,T], \ s < t,
\end{align*}
i.e., (3.3) holds.

Estimate (\ref{eq:3.4}) and the equality in (\ref{eq:3.5}) are direct consequences of (\ref{eq:3.2}) and (\ref{eq:3.3}).

The proof of Theorem~\ref{thm:2} is complete.

\section{Uniqueness of weak solutions}\label{s:4}

Let be $\ve(x)$, $\mu(x)$ ($x \in \Omega$) as in (\ref{eq:2.1}) and (\ref{eq:3.1}).
In addition, suppose that
\begin{align}
\label{eq:4.1}
\begin{cases}
&\exists\, \ve_* = \text{const} > 0,~\mu_* = \text{const} > 0,~such~that\\[2ex]
&(\ve(x) \xi) \cdot \xi\, \geq\, \ve_* |\xi|^2,~(\mu(x) \xi) \cdot \xi\, \geq\, \mu_* |\xi|^2 \quad \forall x \in \Omega,~\forall \xi \in \R^3.
\end{cases}
\end{align}

We consider equ. (\ref{eq:1.4}) with Ohm law
\begin{align*}
j\,\, =\,\, \sigma e,
\end{align*}
where the entries of the matrix $\sigma = \{ \sigma_{kl}(x) \}_{k,l=1,2,3}$ ($x \in \Omega$) are bounded measurable functions in $\Omega$.
\begin{theorem}
\label{thm:3}
Suppose that the matrices $\ve(x)$, $\mu(x)$ ($x \in \Omega$) satsify (\ref{eq:2.1}), (\ref{eq:3.1}) and (\ref{eq:4.1}).

Let $\{ e,h \} \in L^2(Q_T)^3 \times L^2(Q_T)^3$ be a weak solution of (\ref{eq:1.4}), (\ref{eq:1.5}), (\ref{eq:1.10}), (\ref{eq:1.11}) with initial data
\begin{align*}
e_0\,\, =\,\, h_0\,\, = 0 \quad a.e.~in~\Omega.
\end{align*}
Then
\begin{align*}
e\,\, =\,\, h =\,\, 0 \quad a.e.~in~Q_T.
\end{align*}
\end{theorem}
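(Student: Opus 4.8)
The plan is to derive the conclusion from the energy equality of Theorem~\ref{thm:2}, the coercivity hypothesis (\ref{eq:4.1}), and the vanishing initial data, closing the argument with Gronwall's lemma. Since $j = \sigma e$, Theorem~\ref{thm:2} furnishes an absolutely continuous $\widetilde E\colon[0,T]\to[0,+\infty[$ with $\widetilde E = E$ a.e.\ and
\[
\widetilde E(t) = \widetilde E(0) - \intl_0^t\intl_\Omega (\sigma e)\cdot e\dd x\dd\tau, \qquad 0\le t\le T.
\]
By (\ref{eq:4.1}), $\tfrac{\ve_*}{2}\nor{e(\tau)}_{L^2(\Omega)^3}^2 + \tfrac{\mu_*}{2}\nor{h(\tau)}_{L^2(\Omega)^3}^2 \le \widetilde E(\tau)$ for a.e.\ $\tau$, so that $\bigl|\intl_\Omega(\sigma e)\cdot e\dd x\bigr|\le \nor{\sigma}_{L^\infty(\Omega)}\nor{e(\tau)}_{L^2(\Omega)^3}^2 \le C\widetilde E(\tau)$ with $C = 2\nor{\sigma}_{L^\infty(\Omega)}/\ve_*$. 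Hence, \emph{once $\widetilde E(0)=0$ is known}, the bound $\widetilde E(t)\le C\intl_0^t\widetilde E(\tau)\dd\tau$ and Gronwall's lemma force $\widetilde E\equiv 0$; then $E = 0$ a.e., and coercivity yields $e = h = 0$ a.e.\ in $Q_T$. (If $\sigma$ is additionally non-negative, the monotonicity of $\widetilde E$ makes the Gronwall step superfluous.)

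The decisive point is therefore to establish $\widetilde E(0) = 0$. I would first note that in the proof of Theorem~\ref{thm:2} the test functions vanished at $t=0$, so the initial data never entered; here, by contrast, $e_0 = h_0 = 0$ makes the right-hand side of the weak identity (\ref{eq:2.3}) vanish for every admissible $\{\phi,\psi\}$, including those not vanishing at $t=0$. I would re-run the Steklov computation of part~$2^\circ$, whose pointwise identity (\ref{eq:3.12}) holds for a.e.\ $t\in[0,T]$, and integrate it over $[0,t]$ to obtain, with $E_\lambda(t) := \tfrac12\intl_\Omega[(\ve e_\lambda)\cdot e_\lambda + (\mu h_\lambda)\cdot h_\lambda]\dd x$,
\[
E_\lambda(t) = E_\lambda(0) - \intl_0^t\intl_\Omega j_\lambda\cdot e_\lambda\dd x\dd\tau,
\]
where the boundary slice is $e_\lambda(\cdot,0) = \tfrac1\lambda\intl_0^\lambda e(\cdot,s)\dd s$ and similarly for $h_\lambda(\cdot,0)$. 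Testing this against a cutoff $\theta\in C^1([0,T])$ with $\theta(0)=1$, $\theta(T)=0$, and letting $\lambda\to0$ (using $e_\lambda\to e$, $h_\lambda\to h$, $j_\lambda\to j$ in $L^2(Q_T)^3$ together with (\ref{eq:3.3})) identifies $\lim_{\lambda\to0}E_\lambda(0) = \widetilde E(0)$. It thus remains to show $\lim_{\lambda\to0}E_\lambda(0)=0$.

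This is exactly where the main obstacle lies: passing from the weak (dual-space) initial condition to the vanishing of the \emph{quadratic} quantity $E_\lambda(0)$. Since $\ve,\mu$ are independent of $t$, one has $\ve e_\lambda(\cdot,0) = (\ve e)_\lambda(\cdot,0) = \tfrac1\lambda\intl_0^\lambda(\widetilde{\ve e})(s)\dd s$, which by the $V_0^*$-continuity (\ref{eq:2.9}) and the initial condition $(\ve e)(0) = \ve e_0 = 0$ in (\ref{eq:2.10}) converges to $0$ in $V_0^*$, and likewise $\mu h_\lambda(\cdot,0)\to0$ in $V^*$. This is only weak information: on its own it gives, via lower semicontinuity, merely $\widetilde E(0)\ge0$, which is the wrong direction.

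To obtain the matching upper bound I would use the convexity of $\xi\mapsto(\ve(x)\xi)\cdot\xi$ through Jensen's inequality, $E_\lambda(0)\le\tfrac1\lambda\intl_0^\lambda\widetilde E(s)\dd s$, together with the limit identification above, the genuine content being that the zero initial datum—entering through the vanishing right-hand side of (\ref{eq:2.3})—forces this averaged energy to tend to $0$ rather than to a positive constant. Bridging this weak-to-strong gap, that is, establishing the one-sided energy inequality $\widetilde E(0)\le\tfrac12\intl_\Omega[(\ve e_0)\cdot e_0 + (\mu h_0)\cdot h_0]\dd x$ at the initial time, is the step I expect to require the most care, as it must be read off from the Steklov-regularized construction and cannot follow from the abstract properties of $\widetilde E$ alone.
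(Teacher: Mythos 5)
Your reduction to the single claim $\widetilde E(0)=0$ is honest, but that claim is precisely the gap, and the tools you propose cannot close it. The initial condition available to a weak solution is only $(\ve e)(0)=\ve e_0=0$ in $V_0^*$ and $(\mu h)(0)=\mu h_0=0$ in $V^*$ (see (\ref{eq:2.10})); this is dual-space information about the \emph{linear} quantities $\ve e$, $\mu h$, whereas $\widetilde E(0)$ is the limit of the \emph{quadratic} quantity $E(t)$ as $t\to 0$. From $V_0^*$-convergence one gets at best $e(\cdot,t_k)\weak 0$ weakly in $L^2(\Omega)^3$ along suitable $t_k\to 0$, and weak lower semicontinuity then yields only $0\le\widetilde E(0)$ --- the wrong direction, as you yourself observe. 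Your attempted repair via Jensen's inequality is circular: it gives $E_\lambda(0)\le\frac1\lambda\int_0^\lambda\widetilde E(s)\dd s$, and the right-hand side converges to $\widetilde E(0)$ itself, so you learn nothing about whether $\widetilde E(0)$ vanishes. Nowhere does your argument convert the vanishing of $e_0,h_0$ into control of the $L^2$-norms of $e(\cdot,t)$, $h(\cdot,t)$ near $t=0$, and the proposal ends by conceding that the decisive inequality $\widetilde E(0)\le\frac12\int_\Omega[(\ve e_0)\cdot e_0+(\mu h_0)\cdot h_0]\dd x$ is left unproved. As it stands the proof is incomplete at its essential step.

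The paper sidesteps this weak-to-strong difficulty entirely by running the energy argument on the primitives $\hat e(x,t)=\int_0^t e(x,s)\dd s$, $\hat h(x,t)=\int_0^t h(x,s)\dd s$ rather than on $e,h$ themselves. Integrating (\ref{eq:2.7}), (\ref{eq:2.8}) in time with the weight $t-s$ and using $(\ve e)(0)=\ve e_0=0$ produces the identities (\ref{eq:4.5}), (\ref{eq:4.6}), from which one reads off $\hat e(\cdot,t)\in V_0$ and $\hat h(\cdot,t)\in V$; the primitives are therefore admissible test functions, the curl terms cancel upon adding, and one obtains the genuine energy identity (\ref{eq:4.9}) for $\hat e,\hat h$. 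The point is that $\hat e,\hat h$ are continuous in $t$ with values in $L^2(\Omega)^3$ and vanish at $t=0$ \emph{by construction}, so no initial-time energy inequality has to be extracted from dual-space data; Gronwall applied to (\ref{eq:4.9}) gives $\hat e=\hat h=0$, hence $e=h=0$. To salvage your route you would need an independent proof that $e(\cdot,t)\to 0$ strongly in $L^2(\Omega)^3$ as $t\to 0^+$ (at least in an averaged sense), which is essentially the theorem itself for short times; passing to primitives is the standard way around this obstruction.
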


To prove this theorem, we first derive an energy inequality for the primitives of the functions $t \mapsto e(x,t)$, $t \mapsto \mu(x,t)$ ($x \in \Omega$)
(cf. the proof of Theorem \ref{thm:2} above; see also \cite[Sect 7.8.2]{Fabrizio}).
From this inequality the claim $e = h = 0$ a.e. in $Q_T$ follows easily
(cf. also \cite[Chap.~VII,~4.3]{Duvaut} for a different argument).
A uniqueness result for solutions of linear second order evolution equations in Hilbert spaces has been proved in \cite[Chap.~3,~8.2]{Lions}.
\begin{proof}[Proof of Theorem \ref{thm:3}]
Given any $t \in ]0,T[$, we consider (\ref{eq:2.7}) for a.e. $s \in [0,t]$, multiply this identity by $t-s$, integrate over the interval $[0,t]$ and use an integration by parts in the integral
\begin{align*}
\int_0^t \scal{(\ve e)'(s),(t-s)\varphi}_{V_0} \dd s
\end{align*}\\
(observe $(\ve e)(0) = \ve e_0 = 0$ in $V_0^*$; cf. (\ref{eq:2.10})).
It follows
\begin{align}
\label{eq:4.2}
&\int_0^t \intl_\Omega ((\ve e)(x,s)) \cdot \varphi(x) \dd x \dd s \notag\\[1\baselineskip]
&+ \int_0^t \intl_\Omega (-h(x,s) \cdot \curl \varphi(x) + (\sigma e)(x,s) \cdot \varphi(x))(t-s) \dd x \dd s \notag\\[1\baselineskip]
&=\,\, 0
\end{align}\\
for all $\varphi \in V_0$.
Differentiating each term on the left hand side with respect to $t$ we find 
\begin{align}
\label{eq:4.3}
&\intl_\Omega ((\ve e)(x,t)) \cdot \varphi(x) \dd x \notag\\[1\baselineskip]
&+ \int_0^t \intl_\Omega (-h(x,s) \cdot \curl \varphi(x) + (\sigma e)(x,s) \cdot \varphi(x)) \dd x \notag\\[1\baselineskip]
&=\,\, 0
\end{align}\\
for all $\varphi \in V_0$ and for a.e. $t \in [0,T]$.
From (\ref{eq:2.8}) we obtain analogously
\begin{align}
\label{eq:4.4}
\intl_\Omega (\mu h)(x,t) \cdot \psi(x) \dd x + \int_0^t \intl_\Omega e(x,s) \cdot \curl \psi(x) \dd x \dd s\,\, =\,\, 0
\end{align}
for all $\psi \in V$ and for a.e. $t \in [0,T]$.

For $t \in [0,T]$ and for a.e. $x \in \Omega$ we define
\begin{align*}
\hat{e}(x,t) :=\, \int_0^t e(x,s) \dd s, \quad \hat{h}(x,t) :=\, \int_0^t h(x,s) \dd s.
\end{align*}
Then $\hat{e}, \hat{h} \in L^2(Q_T)^3$, and the weak time-derivatives of these functions are $\del_t \hat{e} = e$, $\del_t \hat{h} = h$ a.e. in $Q_T$
(cf. App. I, Prop. I.1.2).
Using Fubini's theorem, (\ref{eq:4.3}) and (\ref{eq:4.4}) can be rewritten in the form
\begin{align}
\label{eq:4.5}
&\intl_\Omega \hat{h}(x,t) \cdot \curl \varphi(x) \dd x
\,\,=\, \intl_\Omega ((\ve e)(x,t) + (\sigma \hat{e})(x,t)) \cdot \varphi(x) \dd x \quad \forall \varphi \in V_0,\\[1\baselineskip]
\label{eq:4.6}
&\intl_\Omega \hat{e}(x,t) \cdot \curl \psi(x) \dd x
\,\,=\, -\intl_\Omega (\mu h )(x,t) \cdot \psi(x) \dd x \quad \forall \psi \in V
\end{align}
for a.e. $t \in [0,T]$, respectively.

From (\ref{eq:4.5}) and (\ref{eq:4.6}) we conclude 
\begin{align*}
&\curl \hat{h}(\cdot,t) \in V \quad \text{[by Riesz' representation theorem],}\\[1\baselineskip]
&\curl \hat{e}(\cdot,t) \in V_0 \quad \text{[by App. II, Prop. II.1]}
\end{align*}
(i.e., $\curl \hat{e}(\cdot,t) \in V$, and 
\begin{align}
\label{eq:4.7}
\intl_\Omega (\curl \hat{e}(x,t)) \cdot \psi(x) \dd x\,\, =\, \intl_\Omega \hat{e}(x,t) \cdot \curl \psi(x) \dd x \quad \forall \psi \in V).
\end{align}
respectively.
Thus, $\varphi = \hat{e}(\cdot, t)$ and $\psi = \hat{h}(\cdot,t)$ are admissible test functions in (\ref{eq:4.5}) and (\ref{eq:4.6}).
Adding then these equations and observing (\ref{eq:4.7}) we find
\begin{align}
\label{eq:4.8}
\intl_\Omega ((\ve e)(x,t)) \cdot \hat{e}(x,t) \dd x
+ \intl_\Omega (\sigma \hat{e})(x,t) \cdot \hat{e}(x,t) \dd x
+ \intl_\Omega (\mu h)(x,t) \cdot \hat{h}(x,t) \dd x
\,\,=\,\, 0
\end{align}
for a.e. $t \in [0,T]$.

To proceed, we notice that for every $x \in \Omega$ the functions $t \mapsto \hat{e}(x,t)$, $t \mapsto \hat{h}(x,t)$
are H\"older continuous with exponent $\frac{1}{2}$ on the interval $[0,T]$.
By (\ref{eq:3.1}) (symmetry of $\ve(x)$, $\mu(x)$),
\begin{align*}
&\frac{1}{2} \frac{\dd}{\dd t} \intl_\Omega ((\ve \hat{e})(x,t) \cdot \hat{e}(x,t) + (\mu \hat{h})(x,t) \cdot \hat{h}(x,t)) \dd x\\[1\baselineskip]
&= \intl_\Omega ((\ve e)(x,t) \cdot \hat{e}(x,t) + (\mu h)(x,t) \cdot \hat{h}(x,t)) \dd x
\end{align*}
for a.e. $t \in [0,T]$.
From (\ref{eq:4.8}) it follows by integration that
\begin{align}
\label{eq:4.9}
&\frac{1}{2} \intl_\Omega ((\ve \hat{e})(x,t) \cdot \hat{e}(x,t) + (\mu \hat{h})(x,t) \cdot \hat{h}(x,t)) \dd x \notag\\[1\baselineskip]
&=\, -\int_0^t \intl_\Omega ((\sigma \hat{e})(x,s) \cdot \hat{e}(x,s) \dd x \dd s
\end{align}
for all $t \in [0,T]$.
Observing (\ref{eq:4.1}) we derive from (\ref{eq:4.9}) by the aid of Gronwall's inequality
\begin{align*}
\intl_\Omega (|\hat{e}(x,t)|^2 + |\hat{h}(x,t)|^2) \dd x\,\, =\,\, 0 \quad \forall t \in [0,T].
\end{align*}
Thus, by Fubini's theorem,
\begin{align*}
\int_0^t e(x,s) \dd s\,\, = \int_0^t h(x,s) \dd s\,\, =\,\, 0 \quad \text{for a.e. } (x,t) \in Q_T.
\end{align*}
We extend $e$, $h$ by zero onto $\Omega \times [T,+\infty[$ and denote this extension by these letters again. 
Hence, for all $\lambda > 0$,
\begin{align*}
\int_t^{t+\lambda} e(x,s) \dd s\,\, =\, \int_t^{t+\lambda} h(x,s) \dd s\,\, =\,\, 0 \quad \text{for a.e. } (x,t) \in Q_T,
\end{align*}
i.e., the Steklov means $e_\lambda$, $h_\lambda$ vanish a.e. in $Q_T$.
Whence
\begin{align*}
e\,\, =\,\, h\,\, = 0 \quad \text{a.e. in } Q_T
\end{align*}
(see App. I, Prop. I.2).

\end{proof}

\appendix
\renewcommand{\thesection}{\Roman{section}}

\section{The Steklov mean of an $L^p$ function}\label{s:AppI}

Let $\Omega \subseteq \R^N$ ($N \geq 2$) be any open set, let $0 < T < +\infty$ and put $Q_T = \Omega \times ]0,T[$.

Let $f \in L^p(Q_T)$ ($1 \leq p < +\infty$).
We extend $f$ by zero a.e. onto $\Omega \times ]T,+\infty[$ and denote this function by $f$ again.
For $\lambda > 0$, the function
\begin{align*}
f_\lambda(x,t)\, =\, \frac{1}{\lambda} \int_t^{t+\lambda} f(x,s) \dd s \quad \text{for a.e. } (x,t) \in Q_T.
\end{align*}
is called \textit{Steklov mean of $f$}.
\begin{propo}
\label{prop:I}
For every $f \in L^p(Q_T)$ and every $\lambda > 0$ there holds
\begin{enumerate}[itemindent=4em,leftmargin=0em,itemsep=2ex,label=\ref{prop:I}.\arabic*,labelsep=2em,topsep=1ex]
\item
\label{prop:I.1}
$\displaystyle \nor{f_\lambda}_{L^p(Q_T)}\, \leq\, \nor{f}_{L^p(Q_T)}$;
\item
\label{prop:I.2}
$\displaystyle \intl_{Q_T} f_\lambda(x,t) \del_t \zeta(x,t) \dd x \dd t\, =\, -\,\frac{1}{\lambda} \intl_{Q_T} (f(x,t+\lambda) - f(x,t)) \zeta(x,t) \dd x \dd t$
\\[1\baselineskip]
for every $\zeta \in C_c^\infty(Q_T)$, i.e., $f_\lambda$ possesses the weak derivative
\begin{align*}
\frac{\del}{\del t} f_\lambda(x,t)\, =\, \frac{1}{\lambda} (f(x,t+\lambda) - f(x,t))
\end{align*}
for a.e. $(x,t) \in Q_T$.
\item
\label{prop:I.3}
$\displaystyle \frac{1}{\lambda} \intl_{Q_T} f(x,t) \left( \int_{t-\lambda}^t \alpha(s) \dd s \right) \dd x \dd t\, =\, \intl_{Q_T} f_\lambda(x,t) \alpha(t) \dd x \dd t$
\\[1\baselineskip]
for any $\alpha \in L^\infty(\R)$ such that $\alpha(t) = 0$ for all $t \in \R \setminus [t_0,t_1]$ \textnormal{(}$0 < t_0 < t_1 < T$ depending on $\alpha$\textnormal{)}.
\end{enumerate}
\end{propo}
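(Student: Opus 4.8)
The plan is to extend $f$ by zero beyond $t=T$ (as already done in the statement) and to treat the three assertions independently; each reduces to an application of Fubini's theorem, supplemented by Jensen's inequality for \ref{prop:I.1} and by the fundamental theorem of calculus for \ref{prop:I.2}. The zero-extension is what lets me handle the integration limits near the right endpoint $t=T$, and I expect the only genuinely delicate point to be the bookkeeping of these limits, which I postpone to \ref{prop:I.3}.

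For \ref{prop:I.1} I would first note that $f_\lambda(x,t)$ is the average of $s\mapsto f(x,s)$ over $[t,t+\lambda]$ against the probability measure $\tfrac1\lambda\dd s$. Jensen's inequality applied to the convex function $r\mapsto|r|^p$ then gives the pointwise bound $|f_\lambda(x,t)|^p\le\tfrac1\lambda\int_t^{t+\lambda}|f(x,s)|^p\dd s$. Integrating this over $(x,t)\in Q_T$ and interchanging the order of the $s$- and $t$-integrations (Tonelli), I observe that for each fixed $s$ the set of admissible $t$ has length at most $\lambda$, so the factor $\tfrac1\lambda$ is absorbed; since $f$ vanishes for $s>T$, the remaining $s$-integral equals exactly $\nor{f}_{L^p(Q_T)}^p$, which is the claim.

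For \ref{prop:I.2} I would fix $x$ for which $f(x,\cdot)\in L^p(0,T)\subset L^1_{\mathrm{loc}}(\R)$ (a.e.\ $x$, by Fubini) and introduce the primitive $F(x,t)=\int_0^t f(x,s)\dd s$, which is absolutely continuous with $\del_t F(x,t)=f(x,t)$ a.e. Writing $f_\lambda(x,t)=\tfrac1\lambda\bigl(F(x,t+\lambda)-F(x,t)\bigr)$ exhibits $f_\lambda(x,\cdot)$ as absolutely continuous with classical derivative $\tfrac1\lambda\bigl(f(x,t+\lambda)-f(x,t)\bigr)$ a.e. Since $\zeta(x,\cdot)\in C_c^\infty(]0,T[)$ for each fixed $x$, an ordinary integration by parts in $t$ (with vanishing boundary terms) yields the stated identity for a.e.\ fixed $x$; integrating over $\Omega$ and invoking Fubini once more gives \ref{prop:I.2}, and the displayed formula for $\del_t f_\lambda$ is read off.

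For \ref{prop:I.3}, which I regard as carrying the main technical subtlety, I would start from the right-hand side, insert $f_\lambda(x,t)=\tfrac1\lambda\int_t^{t+\lambda}f(x,s)\dd s$, and apply Fubini to exchange the $s$- and $t$-integrals over the region $\{t<s<t+\lambda,\ 0<t<T\}$. After the exchange the inner $t$-integral runs over $[\max(0,s-\lambda),\min(T,s)]$; the point is that this may be replaced by $[s-\lambda,s]$ without error, because $\alpha$ is supported in $[t_0,t_1]\subset\,]0,T[$ (so $\alpha$ contributes nothing on $]s-\lambda,0[$ when $s<\lambda$) and because $f$ vanishes for $s>T$ (so the range $s>T$ is immaterial). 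Relabelling the outer variable then identifies the result with the left-hand side. This endpoint bookkeeping is precisely where the support hypothesis on $\alpha$ and the zero-extension of $f$ enter, and it is the only step where real care is required.
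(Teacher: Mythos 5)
Your proposal is correct, and parts \ref{prop:I.1} and \ref{prop:I.3} align with the paper: the paper simply cites \ref{prop:I.1} as a special case of standard mollification estimates (your Jensen--Tonelli argument is the standard proof of exactly that fact), and your treatment of \ref{prop:I.3} is the same Fubini computation as the paper's, merely run from the right-hand side to the left and with the indicator function $\xi_\lambda(s,t)$ of the region $\{t-\lambda\le s\le t\}$ replaced by explicit bookkeeping of the limits $[\max(0,s-\lambda),\min(T,s)]$; your observation about where the support condition on $\alpha$ and the zero-extension of $f$ enter is precisely the content of the paper's manipulation. Where you genuinely diverge is \ref{prop:I.2}. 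The paper works globally in $(x,t)$: it writes $-\intl_{Q_T} f_\lambda\,\del_t\zeta$ as the limit as $h\to 0$ of difference quotients $\frac{1}{h}(\zeta(x,t-h)-\zeta(x,t))$, shifts the difference onto $f_\lambda$, and identifies the limit by splitting off two error terms that vanish by continuity of $f$ with respect to the integral mean (i.e.\ $L^p$-continuity of translations). You instead argue slice-by-slice in $x$: for a.e.\ fixed $x$ the primitive $F(x,\cdot)$ is absolutely continuous, so $f_\lambda(x,\cdot)=\frac{1}{\lambda}(F(x,\cdot+\lambda)-F(x,\cdot))$ is absolutely continuous with the stated a.e.\ derivative by the Lebesgue differentiation theorem, and a classical integration by parts against $\zeta(x,\cdot)\in C_c^\infty(]0,T[)$ gives the identity before a final application of Fubini. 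Your route avoids the limiting argument and the translation-continuity lemma entirely, at the modest cost of invoking the one-dimensional theory of absolutely continuous functions; the paper's route stays within $L^p(Q_T)$ throughout and never fixes $x$. Both are complete proofs.
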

\begin{propo}
\label{prop:II}
For every $f \in L^p(Q_T)$,
\begin{align*}
\lim_{\lambda \to 0} \nor{f_\lambda - f}_{L^p(Q_T)}\, =\, 0.
\end{align*}
\end{propo}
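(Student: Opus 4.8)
The plan is to run the standard three-$\eps$ density argument, whose only genuine analytic input is the contraction estimate of Proposition~\ref{prop:I.1}. First I would fix $\eps > 0$ and choose $g \in C_c(Q_T)$ with $\nor{f-g}_{L^p(Q_T)} < \eps$; such a $g$ exists because $C_c(Q_T)$ is dense in $L^p(Q_T)$ for $1 \le p < +\infty$. As in the statement of the proposition, $f$ and $g$ are extended by zero onto $\Omega \times\, ]T,+\infty[$, so that $f_\lambda$ and $g_\lambda$ are defined for every $\lambda > 0$.

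By linearity of the map $f \mapsto f_\lambda$ I would then split
\[
f_\lambda - f\,\, =\,\, (f-g)_\lambda\, +\, (g_\lambda - g)\, +\, (g - f).
\]
The third term is handled by the choice of $g$, namely $\nor{g-f}_{L^p(Q_T)} < \eps$, and the first term is bounded uniformly in $\lambda$ by Proposition~\ref{prop:I.1}, since
\[
\nor{(f-g)_\lambda}_{L^p(Q_T)}\,\, \le\,\, \nor{f-g}_{L^p(Q_T)}\,\, <\,\, \eps .
\]
This is exactly the point at which the contraction property is indispensable: $f$ itself carries no regularity, so $f_\lambda - f$ cannot be estimated directly, and Proposition~\ref{prop:I.1} is what transports the approximation $g \approx f$ through the averaging operator.

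It remains to treat the middle term, and here I would invoke uniform continuity of the approximant. Since $g$ is continuous with $\supp g \subset K \times [a,b]$ for some compact $K \subset \Omega$ and $0 < a < b < T$, it is uniformly continuous, so to each $\delta > 0$ there corresponds $\lambda_0 \in\, ]0,a[$ with $|g(x,s) - g(x,t)| < \delta$ whenever $|s-t| \le \lambda_0$. For $0 < \lambda \le \lambda_0$ this gives the pointwise bound
\[
|g_\lambda(x,t) - g(x,t)|\,\, =\,\, \left| \frac{1}{\lambda} \int_t^{t+\lambda} \big( g(x,s) - g(x,t) \big) \dd s \right|\,\, \le\,\, \sup_{|s-t| \le \lambda} |g(x,s) - g(x,t)|\,\, <\,\, \delta .
\]
Because $g$ and $g_\lambda$ are supported in the fixed set $K \times [a-\lambda_0, b] \subset Q_T$ for all $\lambda \le \lambda_0$, this uniform bound upgrades to $\nor{g_\lambda - g}_{L^p(Q_T)} \le \delta\,\big(\mes(K)\,(b-a+\lambda_0)\big)^{1/p}$, which is $< \eps$ once $\delta$ is chosen small enough. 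Combining the three estimates yields $\nor{f_\lambda - f}_{L^p(Q_T)} < 3\eps$ for all sufficiently small $\lambda > 0$, and since $\eps > 0$ was arbitrary, the proposition follows.

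I do not expect any single computational obstacle; the difficulty is structural. As $f$ is merely $L^p$, direct estimation is hopeless, and the entire argument rests on pairing the a-priori contraction bound of Proposition~\ref{prop:I.1} with convergence on the dense subclass $C_c(Q_T)$, where the only substantive work is the elementary uniform-continuity estimate above.
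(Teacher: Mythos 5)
Your proof is correct. Note that the paper does not actually prove Proposition \ref{prop:II} at all --- it only remarks that the statement is a special case of well-known results on the mollification of $L^p$-functions --- and your three-$\eps$ argument (density of $C_c(Q_T)$, the contraction bound of Proposition \ref{prop:I.1} applied to $(f-g)_\lambda$, and uniform continuity of the compactly supported approximant, including the correct bookkeeping of the support of $g_\lambda$ inside $K\times[a-\lambda_0,b]\subset Q_T$) is precisely the standard proof that the paper's citation refers to, so it supplies the omitted argument rather than deviating from it.
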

Prop.~\ref{prop:I.1} and Prop.~\ref{prop:II} are special cases of well-known results about the mollification of $L^p$-functions.
The properties of the Steklov mean presented in Prop.~\ref{prop:I.2}, \ref{prop:I.3} are used in \cite[Chap.~III, §2, Lemma~2.1]{LSU} to establish energy inequalities for weak solutions of parabolic equations.
For reader's convenience we present the proofs.
\begin{proof}[Proof of Proposition \ref{prop:I.2}]
Let $\zeta \in C_c^\infty(Q_T)$.
Then
\begin{align*}
- \intl_{Q_T} f_\lambda(x,t) \frac{\del \zeta}{\del t}(x,t) \dd x \dd t \,=\, \lim_{h \to 0} \intl_{Q_T} f_\lambda(x,t) \frac{1}{h} (\zeta(x,t-h) - \zeta(x,t)) \dd x \dd t.
\end{align*}
On the other hand, for $0 < h < \lambda$ we have
\begin{align*}
& \intl_{Q_T} f_\lambda(x,t) \frac{1}{h} (\zeta(x,t-h) - \zeta(x,t)) \dd x \dd t\\[2ex]
& =\, \frac{1}{\lambda h} \intl_{Q_T} \left( \int_{t+\lambda}^{t+\lambda+h} f(x,s) \dd s - \int_t^{t+h} f(x,s) \dd s \right) \zeta(x,t) \dd x \dd t\\[2ex]
& =\, \frac{1}{\lambda h} \intl_{Q_T} \left( \int_{t+\lambda}^{t+\lambda+h} (f(x,s) - f(x,t+\lambda)) \dd s \right) \zeta(x,t) \dd x \dd t\\[1\baselineskip]
& \hspace*{1em}\, - \frac{1}{\lambda h} \intl_{Q_T} \left( \int_t^{t+h} (f(x,s) - f(x,t)) \dd s \right) \zeta(x,t) \dd x \dd t\\[1\baselineskip]
& \hspace*{1em}\, + \frac{1}{\lambda} \intl_{Q_T} (f(x,t+\lambda) - f(x,t)) \zeta(x,t) \dd x \dd t.
\end{align*}
Here, the first and the second term on the right hand side converge to zero when $h \to 0$.
This can be easily seen by combinig Fubini's theorem and continuity of $f$ with respect to the integral mean.

Whence the claim.

\end{proof}
\noindent\textit{Proof of Proposition \ref{prop:I.3}.~}
Let $\lambda > 0$.
We introduce a function $\xi_\lambda: \R^2 \to \{0\} \cup \{1\}$ as follows:
given $t \in \R$, define
\begin{align*}
\xi_\lambda(s,t) =
\begin{cases}
1 & \text{if } s \in [t-\lambda,t],\\
0 & \text{if } s \in \R \setminus [t-\lambda,t],
\end{cases}
\end{align*}
or, equivalently, given $s \in \R$, define
\begin{align*}
\xi_\lambda(s,t) =
\begin{cases}
1 & \text{if } t \in [s,s+\lambda],\\
0 & \text{if } t \in \R \setminus [s,s+\lambda].
\end{cases}
\end{align*}
We obtain for a.e. $x \in \Omega$
\begin{align*}
& \int_0^T f(x,t) \left( \int_{t-\lambda}^t \alpha(s) \dd s \right) \dd t\\[2ex]
& =\, \int_0^T \intl_{\R} f(x,t) \xi_\lambda(s,t) \alpha(s) \dd s \dd t\\[2ex]
& =\, \intl_{\R} \int_0^T f(x,t) \xi_\lambda(s,t) \alpha(s) \dd t \dd s && \text{(by Fubini's theorem)}\\[2ex]
& =\, \int_0^T \left( \int_s^{s+\lambda} f(x,t) \dd t \right) \alpha(s) \dd s && \text{(since $\alpha = 0$ on $\R \setminus [t_0,t_1]$).}
\end{align*}
Integrating over $\Omega$ and dividing by $\lambda$ gives the claim.

\section{Equivalent characterization of the space $V_0$}\label{s:AppII}

Let $\Omega \subset \R^3$ be an open set.
We recall the definition of the spaces $V$ and $V_0$ introduced in Section~\ref{s:2}
\begin{align*}
V &:= \left\{ u \in L^2(\Omega^3); \ \curl u \in L^2(\Omega)^3 \right\},\\
V_0 &:= \left\{ u \in V; \ \intl_{\Omega} (\curl u) \cdot z \dd x\, = \intl_{\Omega} u \cdot (\curl z) \dd x \,\,\, \forall z \in V \right\}.
\end{align*}
\begin{propo}
\label{prop:II.1}
Let $\Omega \subset \R^3$ be a bounded domain with Lipschitz boundary $\Gamma = \del \Omega$.
Then the following are equivalent
\begin{enumerate}[itemindent=3em,leftmargin=0em,itemsep=2ex,label=$\arabic*^\circ$,labelsep=2em,topsep=1ex]
\item $u \in V_0$;
\item $u \in L^2(\Omega)^3$, $\exists\, c = \textnormal{const}\, >\, 0$ such that
\,\,$\displaystyle \left| \intl_{\Omega} u \cdot \curl z \dd x \right|\, \leq\, c \nor{z}_{(L^2)^3}\,\,\,\forall z \in V$.
\end{enumerate}
\end{propo}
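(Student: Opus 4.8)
The plan is to establish the two implications $1^\circ \Rightarrow 2^\circ$ and $2^\circ \Rightarrow 1^\circ$ separately, the first being essentially immediate and the second carrying all the content.

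The implication $1^\circ \Rightarrow 2^\circ$ I would dispatch at once. If $u \in V_0$, then by the very definition of $V_0$ one has $\intl_\Omega u \cdot \curl z\, \dd x = \intl_\Omega (\curl u) \cdot z\, \dd x$ for every $z \in V$, and Cauchy--Schwarz bounds the right-hand side by $\nor{\curl u}_{(L^2)^3}\, \nor{z}_{(L^2)^3}$. Choosing $c = \nor{\curl u}_{(L^2)^3} + 1$, which is strictly positive, then yields the inequality in $2^\circ$.

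For the converse I would first show that $u \in V$, i.e.\ that $\curl u \in L^2(\Omega)^3$. Restricting the inequality in $2^\circ$ to fields $z \in C_c^\infty(\Omega)^3 \subset V$, the functional $z \mapsto \intl_\Omega u \cdot \curl z\, \dd x$ is, by the formal self-adjointness of $\curl$, precisely the action of the distributional curl of $u$ on $z$, that is $\scal{\curl u, z} = \intl_\Omega u \cdot \curl z\, \dd x$. The hypothesis gives $|\scal{\curl u, z}| \le c\, \nor{z}_{(L^2)^3}$ on the $L^2$-dense subspace $C_c^\infty(\Omega)^3$, so by the Riesz representation theorem the distribution $\curl u$ is represented by a field in $L^2(\Omega)^3$ with $\nor{\curl u}_{(L^2)^3} \le c$. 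Hence $u \in V$.

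The decisive step is then to upgrade the integration-by-parts identity from $C_c^\infty(\Omega)^3$ to all of $V$, which is exactly the condition defining $V_0$. I would introduce $D(z) := \intl_\Omega u \cdot \curl z\, \dd x - \intl_\Omega (\curl u) \cdot z\, \dd x$ for $z \in V$; combining the previous step with $2^\circ$ gives $|D(z)| \le 2c\, \nor{z}_{(L^2)^3}$, and by the definition of the distributional curl $D$ vanishes on $C_c^\infty(\Omega)^3$. The point is that $D$ is continuous for the $L^2$-norm, not merely for the graph norm of $V$. Although $C_c^\infty(\Omega)^3$ is in general \emph{not} dense in $V$ for the $V$-norm, it \emph{is} dense in $L^2(\Omega)^3$; so for arbitrary $z \in V$ one selects $z_n \in C_c^\infty(\Omega)^3$ with $z_n \to z$ in $L^2(\Omega)^3$ and passes to the limit to obtain $D(z) = \lim_n D(z_n) = 0$. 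This yields the identity for every $z \in V$, i.e.\ $u \in V_0$. The main obstacle is precisely this last density argument: the essential observation is that the relevant functional is bounded in the weaker $L^2$-norm, so that $L^2$-approximation by smooth compactly supported fields suffices even though approximation in the $V$-norm is unavailable. The identification of $\curl u$ as an $L^2$ field and the concluding limit are otherwise routine, and I do not expect the Lipschitz regularity of $\Gamma$ to be needed for this particular equivalence.
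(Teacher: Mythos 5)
Your proof is correct, but for the implication $2^\circ \Rightarrow 1^\circ$ it takes a genuinely different route from the paper's. Both arguments begin identically, obtaining $\curl u \in L^2(\Omega)^3$ with $\nor{\curl u}_{(L^2)^3} \le c$ from the Riesz representation theorem. From there the paper invokes the tangential trace operator $\gamma_\tau : V \to H^{-1/2}(\Gamma)^3$ (which is where the Lipschitz regularity of $\Gamma$ enters), writes the defect $\intl_\Omega (\curl u)\cdot z\dd x - \intl_\Omega u\cdot(\curl z)\dd x$ as $\scal{\gamma_\tau(u), z}_{(H^{1/2})^3}$ for $z \in H^1(\Omega)^3$, kills $\gamma_\tau(u)$ by a separate lemma (an element of $H^{-1/2}(\Gamma)$ dominated by the interior $L^2(\Omega)$-norm of the test function must vanish, proved with a cutoff concentrating near $\Gamma$), and finally needs the density of $C^1(\ol{\Omega})^3$ in $V$ to pass from $H^1$ test fields to all of $V$. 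You instead observe that the defect functional $D(z) = \intl_\Omega u\cdot(\curl z)\dd x - \intl_\Omega (\curl u)\cdot z\dd x$ is linear on $V$, bounded by $2c\nor{\cdot}_{(L^2)^3}$ there, and vanishes on $C_c^\infty(\Omega)^3$ by the definition of the distributional curl; since $C_c^\infty(\Omega)^3$ is $L^2$-dense, the estimate $\left|D(z-z_n)\right| \le 2c\nor{z-z_n}_{(L^2)^3}$ applied to $z - z_n \in V$ legitimizes the limit and gives $D \equiv 0$ on $V$ directly. Your argument is shorter, avoids trace theory entirely, and, as you correctly note, does not use the Lipschitz hypothesis at all; what the paper's route buys in exchange is the explicit identification of the obstruction as the tangential trace $\gamma_\tau(u)$, i.e., the interpretation of $V_0$ as the space of fields with $n \times u = 0$ on $\Gamma$, which is the picture the paper wants to emphasize.
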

\noindent The implication $1^\circ \Rightarrow 2^\circ$ is easily seen.
To prove the reverse implication, we will use the following
\begin{lemma}
\label{lemma:1}
Let $\Omega \subset \R^3$ be a bounded domain with Lipschitz boundary $\Gamma$.
Let $w^* \in H^{-1/2}(\Gamma)$ satisfy
\begin{align}
\label{eq:II.1}
\exists\, c_0 = \textnormal{const}\, > 0\, \text{ such that }\,\, \left| \scal{w^*, u}_{H^{1/2}} \right|\, \leq \,c_0 \nor{u}_{L^2} \,\,\, \forall u \in H^1(\Omega).
\end{align}
Then
\begin{align*}
w^* = 0.
\end{align*}
\end{lemma}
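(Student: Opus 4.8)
The plan is to recognize hypothesis (\ref{eq:II.1}) as the assertion that a certain linear functional on $H^1(\Omega)$, a priori only continuous for the $H^1$-norm, is in fact continuous for the strictly \emph{weaker} $L^2(\Omega)$-norm, and then to cash this in via density of $H^1(\Omega)$ in $L^2(\Omega)$ together with the surjectivity of the trace. Throughout, $\scal{w^*,u}_{H^{1/2}}$ is read as the pairing of $w^*$ with the trace $\gamma_0 u \in H^{1/2}(\Gamma)$ of $u \in H^1(\Omega)$, consistent with the convention already used in Section~\ref{s:2}; recall that for a Lipschitz domain the trace map $\gamma_0 \colon H^1(\Omega) \to H^{1/2}(\Gamma)$ is bounded and \emph{surjective}.

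First I would introduce the linear functional
\[
L(u)\,\, :=\,\, \scal{w^*,\gamma_0 u}_{H^{1/2}},\qquad u \in H^1(\Omega).
\]
Hypothesis (\ref{eq:II.1}) says precisely that $|L(u)| \leq c_0 \nor{u}_{L^2(\Omega)}$ for all $u \in H^1(\Omega)$, i.e.\ $L$ is uniformly continuous on $H^1(\Omega)$ for the $L^2(\Omega)$-norm. Since $C_c^\infty(\Omega) \subset H^1(\Omega) \subset L^2(\Omega)$, the subspace $H^1(\Omega)$ is dense in $L^2(\Omega)$, so $L$ extends uniquely to a bounded linear functional on the complete space $L^2(\Omega)$. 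By the Riesz representation theorem there is an $f \in L^2(\Omega)$ with
\[
\scal{w^*,\gamma_0 u}_{H^{1/2}}\,\, =\,\, \intl_\Omega f\,u \dd x \qquad \forall u \in H^1(\Omega).
\]

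Next I would test this identity against $u \in C_c^\infty(\Omega)$. For such $u$ one has $\gamma_0 u = 0$, hence $\int_\Omega f\,u \dd x = 0$ for all $u \in C_c^\infty(\Omega)$, and the fundamental lemma of the calculus of variations forces $f = 0$ a.e.\ in $\Omega$. Consequently $\scal{w^*,\gamma_0 u}_{H^{1/2}} = 0$ for every $u \in H^1(\Omega)$. Invoking the surjectivity of $\gamma_0$, every $g \in H^{1/2}(\Gamma)$ has the form $g = \gamma_0 u$ for some $u \in H^1(\Omega)$, whence $\scal{w^*,g}_{H^{1/2}} = 0$ for all $g \in H^{1/2}(\Gamma)$, that is $w^* = 0$.

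The only nonelementary ingredients are the two standard facts about Lipschitz domains that I would quote rather than reprove (cf.\ the references \cite{Amrouche}, \cite[Chap.~IX]{Dautray}, \cite{Girault} cited for the trace theory): the density of $H^1(\Omega)$ in $L^2(\Omega)$ (immediate) and the surjectivity of $\gamma_0$. I expect the main conceptual step to be the reinterpretation of (\ref{eq:II.1}): it is exactly the statement that $L$ is $L^2$-continuous, which is what permits the Riesz representation and then the annihilation of $f$ by compactly supported test functions. An alternative, more hands-on route would construct cutoffs $\eta_\delta$ equal to $1$ on $\Gamma$ and supported in a $\delta$-neighbourhood of $\Gamma$, so that $\eta_\delta U$ keeps the prescribed trace while $\nor{\eta_\delta U}_{L^2(\Omega)} \to 0$ by dominated convergence; I prefer the functional-analytic argument above as it sidesteps the geometric construction of these cutoffs on a merely Lipschitz boundary.
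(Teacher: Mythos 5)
Your proof is correct, but it follows a genuinely different route from the paper's. The paper argues by contradiction and stays entirely ``hands-on'': assuming $w^* \neq 0$, it picks $u_0 \in H^1(\Omega)$ with $\scal{w^*,u_0}_{H^{1/2}} \neq 0$, chooses an interior open set $\Omega'$ with $\ol{\Omega'}\subset\Omega$ so large that $\intl_{\Omega\setminus\Omega'} u_0^2 \dd x \le \bigl(\tfrac{1}{2c_0}|\scal{w^*,u_0}_{H^{1/2}}|\bigr)^2$, and multiplies $u_0$ by a cutoff $\zeta\in C^1(\ol{\Omega})$ vanishing on $\Omega'$ and equal to $1$ on $\Gamma$; since $\zeta u_0$ has the same trace as $u_0$, hypothesis (\ref{eq:II.1}) applied to $\zeta u_0$ gives $|\scal{w^*,u_0}_{H^{1/2}}| \le \tfrac12 |\scal{w^*,u_0}_{H^{1/2}}|$, a contradiction. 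This is essentially the alternative you sketch and set aside in your last sentence. Your main argument instead reads (\ref{eq:II.1}) as $L^2$-continuity of $u\mapsto\scal{w^*,\gamma_0 u}_{H^{1/2}}$, extends by density, represents the extension by some $f\in L^2(\Omega)$ via Riesz, annihilates $f$ by testing against $C_c^\infty(\Omega)$, and concludes from the surjectivity of the trace. Both arguments are sound, and both ultimately rest on the same two trace facts: that functions supported away from $\Gamma$ (or multiplied by a boundary cutoff) do not change the pairing, and that a functional vanishing on all traces of $H^1(\Omega)$ functions is zero --- note the paper needs the latter too, in its very first step ``$w^*\neq 0$ implies there exists $u_0\in H^1(\Omega)$ with $\scal{w^*,u_0}_{H^{1/2}}\neq 0$''. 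What your version buys is conceptual transparency (the hypothesis literally says the trace functional is $L^2$-bounded, hence ``lives in the interior'' and must vanish) at the cost of the Riesz detour; the paper's version is shorter and avoids introducing the representing function $f$, but requires the small geometric construction of the cutoff near a Lipschitz boundary.
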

Indeed, if $w^* \not= 0$, then there would exist $u_0 \in H^1(\Omega)$ such that
\begin{align*}
\scal{w^*, u_0}_{H^{1/2}} \, \not=\, \,0.
\end{align*}
We then take an open set $\Omega' \subset \Omega$ and a function $\zeta \in C^1(\ol{\Omega})$ with the following properties
\begin{itemize}[topsep=1\baselineskip]
\setlength{\itemsep}{2ex}
\item\, $\displaystyle \ol{\Omega'} \subset {\Omega}, \quad \intl_{\Omega \setminus \Omega'} u_0^2 \dd x
\,\leq \,\left( \dfrac{1}{2c_0} \left| \scal{w^*, u_0}_{H^{1/2}} \right| \right)^2$,
\item\, $0\, \leq \zeta \leq\, 1\,\,\,$ in $\Omega$,\,\,\, $\zeta\, =\, 0\,\,\,$ in $\Omega'$,\,\,\, $\zeta\, =\, 1\,\,\,$ on $\Gamma$.
\end{itemize}
It follows
\begin{align*}
\left| \scal{w^*, u_0}_{H^{1/2}} \right|
&\, =\,\left| \scal{w^*, \zeta u_0}_{H^{1/2}} \right|\\[0.7\baselineskip]
&\, \leq\, c_0 \left( \ \intl_{\Omega \setminus \Omega'} u_0^2 \dd x \right)^{1/2} & \text{(by (\ref{eq:II.1}))}\\[0.7\baselineskip]
&\, \leq\, \dfrac{1}{2} \left| \scal{w^*, u_0}_{H^{1/2}} \right|,
\end{align*}
a contradiction.
\begin{proof}[Proof of $2^\circ \Rightarrow 1^\circ$]
From $2^\circ$ one concludes by the aid of Riesz' representation theorem that $\curl u \in L^2(\Omega)^3$.
We obtain
\begin{align*}
\intl_\Omega (\curl u) \cdot z \dd x - \intl_\Omega u \cdot (\curl z) \dd x\,\, =\,\, \scal{\gamma_\tau(u), z}_{(H^{1/2})^3} \quad \forall z \in H^1(\Omega)^3
\end{align*}
(cf., e.g., \cite[p. 207]{Dautray}; recall $\gamma_\tau(u) = n \times u |_\Gamma$ if $u \in C^1(\ol{\Omega})^3$, cf. Section~\ref{s:2}).
Thus, for all $z \in H^1(\Omega)^3$,
\begin{align*}
\left| \scal{\gamma_\tau(u), z}_{(H^{1/2})^3} \right|\, \leq\, \left( \nor{\curl u}_{(L^2)^3} + c \right) \nor{z}_{(L^2)^3} \quad \text{(by $2^\circ$)}.
\end{align*}
By the above lemma,
\begin{align*}
\gamma_\tau(u)\,\, =\,\, 0 \text{ in } H^{-1/2}(\Gamma)^3.
\end{align*}
The density of $C^1(\ol{\Omega})^3$ in $V$ (cf. \cite[p. 204]{Dautray}, \cite[Chap. VII, Lemme 4.1]{Duvaut}) implies
\begin{align*}
\intl_\Omega (\curl u) \cdot z \dd x - \intl_\Omega u \cdot (\curl z) \dd x\,\, =\,\, 0 \quad \forall z \in V,
\end{align*}
i.e., $u \in V_0$.

\end{proof}
\noindent From Prop.~\ref{prop:II.1} we conclude\\[1\baselineskip]
\textit{%
For $\{u, v\} \in L^2(\Omega)^3 \times L^2(\Omega)^3$ the following are equivalent
\begin{enumerate}[itemindent=0.0em,leftmargin=2em,itemsep=2ex,labelsep=1ex,topsep=1ex,label=(\alph*) ]
\item $\{u, v\} \in V \times V_0$;
\item $\exists\, c\, =\, \textnormal{const} > 0$ such that\\[1\baselineskip]
$\displaystyle \left| \intl_\Omega \left( -u \cdot \curl \varphi + v \cdot \curl \psi \right) \dd x \right|
\,\leq\, c \left( \nor{\varphi}_{(L^2)^3}^2 + \nor{\psi}_{(L^2)^3}^2 \right)^{1/2} \quad \forall \{\varphi, \psi\} \in V_0 \times V$.
\end{enumerate}
}


\allowdisplaybreaks[4]

\end{document}